\documentclass[11pt]{article}
\usepackage{amsmath,amsfonts,amssymb,amsthm}
\usepackage{latexsym}
\usepackage{makeidx}
\usepackage[left=1.25in,right=1.25in,top=1.0in,bottom=1.25in]{geometry}

\newtheorem{definition}{Definition}
\newtheorem{theorem}[definition]{Theorem}
\newtheorem{lemma}[definition]{Lemma}
\newtheorem{fact}[definition]{Fact}

\newtheorem{claim}[definition]{Claim}

\newcommand\N{{\mathbb N}}

\newcommand{\restr}{\upharpoonright}

\date{}

\begin{document}
\title{Building suitable sets for locally compact groups by means of continuous selections\footnote{
{\em MSC Subj. Class.\/}: Primary: 22D05; Secondary: 22A05, 22C05, 54A25, 54B05, 54B35, 54C60, 54C65, 54D30, 54D45, 54H11.\endgraf
{\em Keywords and phrases\/}: locally compact group,  
generating rank, suitable set, lower semicontinuous, set-valued map, selection, converging sequence, compactly generated.
\endgraf
{\em Mailing address:\/} Division of Mathematics, Physics and Earth Sciences, 
Graduate School of Science and Engineering, 
Ehime University, 790-8577, Japan. 
{\em E-mail address:\/} {\tt dmitri@dpc.ehime-u.ac.jp\/}.
\endgraf
To appear in: {\sl Topology and its Applications\/}.  
\endgraf
{The author gratefully acknowledges partial financial support from the Grant-in-Aid for Scientific Research no.~19540092 by the Japan Society for the Promotion Science (JSPS).}
}}
\author{Dmitri Shakhmatov}
\maketitle
\begin{center}
{\sl Dedicated to the  memory of Jan Pelant\/}
\end{center}
\begin{abstract}
If a discrete subset $S$ of a topological group $G$ with the identity $1$ generates a dense subgroup of $G$ and $S\cup\{1\}$ is closed in $G$, then $S$ is called a {\em suitable set\/} for $G$.
We apply Michael's selection theorem to offer a direct, self-contained, purely topological proof of the result of Hofmann and Morris \cite{HM} on the existence of suitable sets in locally compact groups. Our approach 
uses only elementary facts from (topological) group theory. 
\end{abstract}

\def\grp#1{\left\langle{#1}\right\rangle}

All topological groups considered in this paper are assumed to be Hausdorff,
and all topological spaces are assumed to be Tychonoff.

\section{Motivating background}

Let $G$ be a group. We use $1_G$ to denote the identity element of $G$.
If $X$ is a subset of $G$, then $\grp{X}$ will denote the smallest subgroup of $G$ containing $X$, and we say that $X$ (algebraically) {\em generates\/} $\grp{X}$.

\begin{definition}
\label{suitable:set:definition}
{\rm \cite{D, mel, HM}}
{\rm 
A subset $X$ of a topological group $G$ is called a {\em suitable set\/} for $G$ provided that:
\begin{itemize}
\item[\rm{(i)}]
$X$ is discrete,
\item[\rm{(ii)}]
$X\cup\{1_G\}$ is closed in $G$, 
\item[\rm{(iii)}]
$\grp{X}$ is dense in $G$.
\end{itemize}
}
\end{definition}

Suitable sets were considered first in the early sixties by 
Tate in the framework of Galois cohomology (see \cite{D}). Tate 
proved\footnote{This proof is extremely condensed. Detailed proofs 
can be found in \cite{RZ} and \cite{HM2}.} that every profinite 
group has a suitable set. This result has later been proved also by Mel'nikov \cite{mel}. Later on,
Hofmann and Morris discovered the following fundamental theorem:
\begin{theorem}
\label{Hofmann-Morris:theorem}
{\rm \cite[Theorem 1.12]{HM}}
Every locally compact group has a suitable set.
\end{theorem}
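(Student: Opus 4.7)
My strategy is to combine two structural reductions with a selection-theoretic lifting argument, motivated by the abstract's emphasis on Michael's selection theorem.

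First I would reduce to the compactly generated (hence $\sigma$-compact) case: any locally compact group $G$ has an open, compactly generated subgroup $H$, and $G/H$ is discrete, so a suitable set for $G$ can be assembled from one for $H$ together with a closed-discrete transversal of the nontrivial cosets of $H$. Then, invoking the Yamabe--Gleason approximation theorem, a compactly generated $G$ contains a compact normal subgroup $K$ with $G/K$ a Lie group. The Lie group $G/K$ admits a suitable set by elementary means (finitely many topological generators of the identity component, plus a transversal of the discrete group of connected components), so the task reduces to producing a suitable set inside the compact group $K$ and lifting the one on $G/K$ to $G$ within a prescribed neighborhood of $1_G$.

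For a compact $K$, present $K$ as the inverse limit of its compact Lie quotients $K/N_\alpha$ along a neighborhood base $\{N_\alpha\}$ at $1_K$ consisting of compact normal subgroups with $\bigcap_\alpha N_\alpha = \{1_K\}$. Each $K/N_\alpha$ has a finite suitable set $F_\alpha$. The idea is to lift each $F_\alpha$ to a finite set $\tilde F_\alpha \subseteq K$ contained in a shrinking open neighborhood $U_\alpha$ of $1_K$, and set $X := \bigcup_\alpha \tilde F_\alpha$. Provided the family $\{U_\alpha\}$ is locally finite away from $1_K$, the set $X \cup \{1_K\}$ is closed, $X$ is discrete, and since $X$ projects onto the generating set $F_\alpha$ in each $K/N_\alpha$, the subgroup $\grp{X}$ is dense in $K$ by the inverse-limit topology.

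Michael's selection theorem enters at the lifting step. The natural coset map $\varphi_\alpha\colon K/N_\alpha \to 2^K$, $gN_\alpha \mapsto gN_\alpha$, has compact values and is lower semicontinuous; restricting to a sub-map whose values are forced into a chosen translate of $U_\alpha$ near each point of $F_\alpha$ preserves lower semicontinuity, and Michael's theorem then produces a continuous selection $s_\alpha$ on (a neighborhood of) $F_\alpha$, from which one reads off $\tilde F_\alpha := s_\alpha(F_\alpha) \subseteq U_\alpha$. The main obstacle I anticipate is twofold. On the one hand, the classical forms of Michael's theorem require the target to be metrizable (Banach, or completely metrizable with compact values), so the possibly non-metrizable $K$ must be handled carefully---perhaps by working inside pairs of compact Lie quotients $K/N_\beta \to K/N_\alpha$ rather than inside $K$ itself. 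On the other hand, the inductive construction of the data $(N_\alpha, F_\alpha, U_\alpha, s_\alpha)$ must be coordinated so that the eventual family $\{\tilde F_\alpha\}$ is locally finite off $1_K$; that is the delicate bookkeeping on which the closedness of $X\cup\{1_K\}$ ultimately depends.
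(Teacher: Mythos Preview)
Your reduction to the compactly generated case matches the paper. The core construction for compact $K$, however, has a genuine gap. You propose to lift each finite suitable set $F_\alpha \subseteq K/N_\alpha$ to $\tilde F_\alpha \subseteq U_\alpha$, with $\{U_\alpha\}$ a shrinking neighbourhood base at $1_K$. But any lift of $F_\alpha$ lies in $\bigcup_{f\in F_\alpha} fN_\alpha$, and these cosets are \emph{not} near $1_K$: once $N_\alpha\subseteq U_\alpha$, the intersection $fN_\alpha\cap U_\alpha$ is empty for every $f\notin U_\alpha N_\alpha/N_\alpha$. So your restricted set-valued map has empty values and Michael's theorem does not apply; and if you drop the restriction, $\tilde F_\alpha$ simply cannot sit inside $U_\alpha$. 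Concretely, for profinite $K$ each $K/N_\alpha$ is finite and $F_\alpha$ must meet every nontrivial coset of the identity, so $\{\tilde F_\alpha\}$ can never be locally finite off $1_K$ and $X\cup\{1_K\}$ will not be closed. (Note also that lifting a \emph{finite} set needs no selection theorem---one just picks preimages---which is a signal that Michael's theorem is being invoked at the wrong place.)

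The paper reorganizes the induction so that only the \emph{new} material at each stage is small. One fixes a compact zero-dimensional domain $S(X)$ (the one-point compactification of a discrete set of size $w(G)$) and builds, by transfinite recursion, continuous maps $f_\alpha\colon S(X)\to G_\alpha$ with $f_\alpha(*)=1$ and $\langle f_\alpha(S(X))\rangle$ dense, where $G_\alpha$ is the image of $G$ under the diagonal of the first $\alpha$ quotient maps $\psi_\beta\colon G\to G/N_\beta$ supplied by Kakutani--Kodaira (so only separable metric quotients are used; no Yamabe--Gleason, no Lie theory). At a successor step $\alpha=\beta+1$, Michael's zero-dimensional selection theorem is applied with domain $Y_\beta\subseteq S(X)$ and metric target $H_\beta=G/N_\beta$ to lift the \emph{entire continuous map} $f_\beta$ through $\pi^\alpha_\beta\colon G_\alpha\to G_\beta$; one then adjoins, on fresh coordinates of $S(X)$, a countable sequence densely generating $\ker\pi^\alpha_\beta$. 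This kernel \emph{is} small (it embeds in $\{1_{G_\beta}\}\times H_\beta$), so the added points automatically converge to $1$. Limit stages close up because $G_\alpha$, being locally compact, is closed in the corresponding inverse limit. Your closing remark about ``working inside pairs $K/N_\beta\to K/N_\alpha$'' is pointing in exactly this direction; the missing idea is that Michael's theorem should lift maps from $S(X)$ coherently through the tower, with density at each new level secured by generators of the \emph{kernel}, not of the whole quotient.
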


Let us briefly outline main points of the proof 
from \cite{HM}.
The authors first prove the existence of suitable sets in compact connected Abelian groups. This is accomplished by using the full strength of the theory of free compact Abelian groups \cite{HM1}. The theorem for compact connected groups then follows from the Abelian compact connected case and the result of Kuranishi \cite{ku}
that every compact connected simple group has a dense subgroup generated by two elements. 
Since compact totally disconnected groups have suitable sets 
by the results of Tate and Mel'nikov (cited above), the authors of \cite{HM}
then combine connected and totally disconnected cases together to get the 
conclusion   
for all compact groups by deploying a theorem of Lee \cite{Lee}: Every compact group $G$ contains a closed totally disconnected subgroup $K$ such that
$G=c(G)\cdot K$, where $c(G)$ is the connected component of $G$. Having proved the result in compact case, Hofmann and 
Morris then proceed to deduce the general case from the compact case 
using some structure theorems for locally compact groups.

The main purpose of this article is to offer a direct, self-contained, 
purely topological proof of Theorem \ref{Hofmann-Morris:theorem} 
based on Michael's selection theorem. Our proof is in the spirit of \cite{Uspenskii, Sh}, and uses only elementary facts from (topological) group theory. 

Theorem \ref{Hofmann-Morris:theorem} allowed 
Hofmann and Morris \cite{HM} to introduce the {\em generating rank\/} 
$$
s(G)=\min\{|X|: X \text{ is a suitable set for }G\}
$$
of a locally compact group $G$. (For profinite groups, $s(G)$ has been already defined by Mel$'$nikov \cite{mel}.)
As witnessed by the fact that the whole Chapter 12 of the monograph \cite{HMbook} by Hofmann and 
Morris is devoted to the study of this cardinal function
(and its relation to the weight),
$s(G)$ is undoubtedly one of the most 
important cardinal invariants of a (locally) compact group $G$. 

Let $G$ be a topological group.
Following \cite{DS} define
the {\em topologically generating weight $tgw(G)$\/} of $G$ by
$$
tgw(G)=\min\{w(F): F \text{ is closed in } G \text{ and }\grp{F} \text { is 
dense in }
G\}, 
$$ 
where $w(X)=\min\{|\mathcal{B}|: \mathcal{B}$ is a base of $X\}+\omega$
is the {\em weight\/} of a space $X$.
The two principle results of \cite{DS} are summarized in the following
\begin{theorem}
\label{TGW:theorem}
Let $G$ be a compact group. Then:
\begin{itemize}
\item[(i)]
$tgw(G)=s(G)$ whenever $s(G)$ is infinite, and
\item[(ii)]
$tgw(G)=w(G/c(G))\cdot \sqrt[\omega]{w(c(G))}$,
where $c(G)$ is the connected component of $G$ and $\sqrt[\omega]{\tau}$ is defined to be the smallest infinite cardinal
$\kappa$ such that $\kappa^\omega\ge\tau$.
\end{itemize}
\end{theorem}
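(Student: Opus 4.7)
The plan is to treat the two parts of Theorem~\ref{TGW:theorem} separately, handling within each the easy inequality before the hard one. For (i), the bound $tgw(G) \le s(G)$ is immediate: take a suitable $X$ with $|X|=s(G)$ and set $F := X \cup \{1_G\}$. Clauses (ii) and (iii) of Definition~\ref{suitable:set:definition} say that $F$ is closed and $\langle F\rangle$ is dense; clause (i) together with compactness of $F$ identify $F$ with the one-point compactification of the discrete space $X$, because every neighborhood of $1_G$ in $F$ has compact---hence, in the discrete space $X$, finite---complement. Therefore $w(F) = |X|+\omega = s(G)$, using that $s(G)$ is infinite. The reverse inequality $s(G) \le tgw(G)$ is where the real work lies; I would approach it by establishing, in parallel to (ii), the formula $s(G) = w(G/c(G))\cdot\sqrt[\omega]{w(c(G))}$ whenever $s(G)$ is infinite, so that (i) follows by direct comparison with (ii). The core task is: given any closed $F\subseteq G$ with $\langle F\rangle$ dense and $w(F)=\kappa$, produce a suitable set of cardinality at most $\kappa$. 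A natural route is to run the Michael-selection construction of this paper \emph{inside} $F$ (or inside its finite products $F^n$), indexing the recursion by a $\pi$-base of the subspace $F$ rather than of $G$, so that the resulting discrete set has size bounded by $w(F)$.

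For (ii), I would use the decomposition $c(G)\hookrightarrow G\twoheadrightarrow G/c(G)$, with $c(G)$ compact connected and $G/c(G)$ profinite, and treat the two factors independently. On the profinite side, $tgw(H)=w(H)$ for any infinite profinite $H$: the family of open normal subgroups of finite index forms a base at $1_H$ of cardinality $w(H)$, and any closed topologically generating set must surject onto every finite quotient, forcing $tgw(H)\ge w(H)$. The bound $tgw(G)\ge w(G/c(G))$ then follows by projecting a closed topologically generating set of $G$ onto $G/c(G)$. On the connected side, for a compact connected $K$ with closed topologically generating $F$ of weight $\mu$, one uses Pontryagin duality (after passing to $K/[K,K]$) together with the structure theorem reducing $K$ to an extension by a central torus and products of compact simple Lie groups to deduce $w(K)\le \mu^{\omega}$, and hence $w(F)\ge\sqrt[\omega]{w(K)}$; applying this to $K=c(G)$ yields the second factor in (ii). The matching upper bound is constructive: exhibit in $G/c(G)$ a closed topologically generating set of weight $w(G/c(G))$ using the cofinal family of open normal subgroups, exhibit in $c(G)$ one of weight $\sqrt[\omega]{w(c(G))}$ using that $\T^\tau$ is monothetic exactly when $\tau\le\cont$ (for the torus factors) together with Kuranishi's theorem (for the simple factors), then lift and combine.

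The main obstacle is the hard direction of (i): producing a genuinely suitable set (rather than just a dense subset) of cardinality bounded by $w(F)$ from an arbitrary closed topologically generating $F$ requires a selection argument that is sensitive to weight, and this is where the analogy with the Michael-selection proof of Theorem~\ref{Hofmann-Morris:theorem} must be pushed further, to respect the ambient subspace structure of $F$. A secondary difficulty in (ii) is pinning down exactly the $\omega$-th root: the exponent $\omega$ encodes the fact that closures of subgroups in compact groups are controlled by countable operations, and extracting this quantitatively requires a careful interplay between Pontryagin duality and the Lie-theoretic structure of $c(G)$.
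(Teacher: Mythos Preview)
The paper does not contain a proof of Theorem~\ref{TGW:theorem}. Look again at the surrounding text: the theorem is introduced as ``The two principle results of \cite{DS} are summarized in the following'' and immediately afterward the paper says ``The proof of this theorem in \cite{DS} is essentially topological and completely self-contained \emph{with the only exception} of Theorem~\ref{Hofmann-Morris:theorem} which is still necessary.'' The present paper's contribution is precisely to supply a self-contained proof of Theorem~\ref{Hofmann-Morris:theorem}; Theorem~\ref{TGW:theorem} is quoted from \cite{DS} as motivation, not re-proved here. So there is no ``paper's own proof'' to compare your proposal against.

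As for the proposal itself: your easy direction of (i) is fine and matches Fact~\ref{fact11}. The rest is an outline rather than a proof, and you are candid about this. Two remarks. First, your plan to deduce (i) by independently establishing $s(G)=w(G/c(G))\cdot\sqrt[\omega]{w(c(G))}$ and then comparing with (ii) is circular in spirit: in \cite{DS} the logic runs the other way, with (i) used to transfer information between $s(G)$ and $tgw(G)$. The direct route to $s(G)\le tgw(G)$ is to take a closed $F$ with $\grp{F}$ dense and $w(F)=\kappa$, and produce a suitable set of size $\le\kappa$ \emph{inside the compact group $\overline{\grp{F}}=G$}; but the selection machinery of this paper gives a suitable set of size bounded by the weight of $G$, not of $F$, so ``running the construction inside $F$'' is not a minor tweak---it requires a genuinely different argument (density of $\grp{F}$ must be exploited, not just the topology of $F$). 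Second, your sketch for (ii) invokes Kuranishi's theorem and structure theory for compact connected groups, which is exactly the apparatus the present paper is designed to avoid; the proof in \cite{DS} is advertised as ``essentially topological,'' so if you want to reconstruct it you should look for an argument that bypasses the Lie-theoretic reduction.
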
 
The proof of this theorem in \cite{DS} is essentially topological and completely self-contained {\em with the only exception\/} of Theorem \ref{Hofmann-Morris:theorem} which is still necessary.
Our present manuscript 
completes the job started in \cite{DS}
by providing a self-contained,
purely topological proof of Theorem \ref{Hofmann-Morris:theorem}.
It is worth mentioning that in \cite[Section 9]{DS} Theorem \ref{TGW:theorem} has been used to deduce
(as straightforward corollaries) a series of major results from  Chapter 12 of the monograph \cite{HMbook}
by Hofmann and 
Morris.

\section{Necessary facts}

\def\lsc{lower semicontinuous}

In this section we collect (mostly) 
well-known facts that will be used in the proof.

Recall that a map $f:X\to Y$ is:

(i)
{\em open\/} provided that $f(U)$ is open 
in $Y$ for every open subset $U$ of $X$,

(ii)
 {\em closed\/} provided that $f(F)$ is closed 
in $Y$ for every closed subset $F$ of $X$,

(iii)
{\em perfect\/} if $f$ is a closed map and $f^{-1}(y)$ is compact for every $y\in Y$.

\begin{fact}
\label{perfect:is:divisible}
{\rm \cite[Proposition 3.7.5]{Engelking}}
Assume that $f:X\to Y$ and $g:Y\to Z$ are continuous surjections and the map
$g\circ f:X\to Z$ is perfect. Then $g$ is also perfect.
\end{fact}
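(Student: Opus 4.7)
The plan is to verify the two conditions defining a perfect map directly from the definition: that $g$ is a closed map, and that $g^{-1}(z)$ is compact for every $z\in Z$. Both are quick consequences of the hypotheses once one exploits surjectivity of $f$ to pull subsets of $Y$ back to $X$ faithfully.

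For the compactness of fibers, I would fix $z\in Z$ and observe the identity
\[
g^{-1}(z)=f\bigl((g\circ f)^{-1}(z)\bigr).
\]
The inclusion ``$\supseteq$'' is automatic from $g(f(x))=z$, and ``$\subseteq$'' uses surjectivity of $f$: any $y\in g^{-1}(z)$ is $f(x)$ for some $x\in X$, and then $x\in(g\circ f)^{-1}(z)$. Since $g\circ f$ is perfect, $(g\circ f)^{-1}(z)$ is compact, and its continuous image $f\bigl((g\circ f)^{-1}(z)\bigr)=g^{-1}(z)$ is compact too.

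For closedness, I would take an arbitrary closed $F\subseteq Y$ and show that $g(F)$ is closed in $Z$. Continuity of $f$ makes $f^{-1}(F)$ closed in $X$, and since $g\circ f$ is perfect (hence closed), the image $(g\circ f)(f^{-1}(F))$ is closed in $Z$. Surjectivity of $f$ yields $f(f^{-1}(F))=F$, whence
\[
(g\circ f)(f^{-1}(F))=g\bigl(f(f^{-1}(F))\bigr)=g(F),
\]
so $g(F)$ is closed, as required.

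There is no real obstacle here; the only subtle point is that surjectivity of $f$ is used twice in an essential way (to identify $g^{-1}(z)$ with $f((g\circ f)^{-1}(z))$ and to recover $F$ from $f^{-1}(F)$), and without it the conclusion would fail in general. Since the argument is short and elementary, I would keep the write-up to a single concise paragraph rather than separating the two verifications.
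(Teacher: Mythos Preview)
Your argument is correct and is essentially the standard proof of this fact (as in Engelking's Proposition~3.7.5). Note, however, that the paper does not actually give its own proof of this statement: it is listed as a Fact with a citation to \cite{Engelking}, so there is no in-paper proof to compare against; your write-up would serve perfectly well as a self-contained verification.
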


\def\diag{\triangle}
For every $i\in I$ let $f_i:X\to Y_i$ be a map. The {\em diagonal product\/}
$\diag\{f_i:i\in I\}$ of the family $\{f_i:i\in I\}$ is a map 
$f:X\to \prod\{Y_i:i\in I\}$ which assigns
to every $x\in X$ the point $\{f_i(x)\}_{i\in I}$ of the Cartesian product 
$\prod\{Y_i:i\in I\}$. 
(More precisely, $f$ assigns to each $x\in X$ the point 
$f(x)\in \prod\{Y_i:i\in I\}$ defined by $f(x)(i)=f_i(x)$ for all $i\in I$.)

\begin{fact}
\label{diagonal:of:perfect:maps}
{\rm \cite[Theorem 3.7.10]{Engelking}}
For every $i\in I$ let $f_i:X\to Y_i$ be a continuous perfect map.
Then the diagonal product $\diag\{f_i:i\in I\}$ is also a continuous 
perfect map.
\end{fact}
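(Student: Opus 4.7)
The plan is to establish continuity and compactness of fibers of $f = \diag\{f_i : i \in I\}$ by direct computation, and then reduce closedness to the two-factor case by a compactness argument. Continuity is immediate from the universal property of the product topology, since $\pi_i \circ f = f_i$ is continuous for every coordinate projection $\pi_i$. Compactness of fibers is also easy: fixing any $i_0 \in I$, the fiber $f^{-1}(y) = \bigcap_{i \in I} f_i^{-1}(y_i)$ is a closed subset (each $Y_i$ is Hausdorff and each $f_i$ continuous, so each $f_i^{-1}(y_i)$ is closed) of the compact space $f_{i_0}^{-1}(y_{i_0})$, hence compact.

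Closedness of $f$ is the substantive point. First I would reduce from an arbitrary $I$ to finite index sets. Given closed $F \subseteq X$ and $y \notin f(F)$, I work inside the compact space $K = f_{i_0}^{-1}(y_{i_0})$: the family of closed subsets $\{F \cap K\} \cup \{f_i^{-1}(y_i) \cap K : i \in I\}$ of $K$ has empty intersection, so by compactness some finite subfamily already does, yielding a finite $J \subseteq I$ (containing $i_0$) with $F \cap \bigcap_{i \in J} f_i^{-1}(y_i) = \emptyset$. Any basic open neighborhood of $(y_i)_{i \in J}$ in $\prod_{i \in J} Y_i$ disjoint from $\diag\{f_i : i \in J\}(F)$ then extends by $\prod_{i \notin J} Y_i$ to a basic open neighborhood of $y$ in $\prod_{i \in I} Y_i$ disjoint from $f(F)$. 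Thus it suffices to prove that $\diag\{f_i : i \in J\}$ is closed for finite $J$; an easy induction on $|J|$ reduces this to the binary case.

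For two perfect maps $f_1 : X \to Y_1$ and $f_2 : X \to Y_2$, I factor $f_1 \diag f_2 = h \circ g$, where $g : X \to X \times Y_2$ is the graph embedding $g(x) = (x, f_2(x))$ and $h = f_1 \times \mathrm{id}_{Y_2} : X \times Y_2 \to Y_1 \times Y_2$. Since $Y_2$ is Hausdorff and $f_2$ is continuous, the graph of $f_2$ is closed in $X \times Y_2$, so $g$ is a closed topological embedding, in particular perfect. The map $h$ is perfect by a tube argument: its fiber over $(y_1, y_2)$ is $f_1^{-1}(y_1) \times \{y_2\}$, which is compact; and given closed $F \subseteq X \times Y_2$ with $(y_1, y_2) \notin h(F)$, I cover the compact fiber by finitely many basic open rectangles missing $F$, combine them into $U \times V$ with $f_1^{-1}(y_1) \subseteq U$ and $(U \times V) \cap F = \emptyset$, and use closedness of $f_1$ to thicken the $Y_1$-coordinate to $W \ni y_1$ with $f_1^{-1}(W) \subseteq U$. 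Then $W \times V$ is a neighborhood of $(y_1, y_2)$ disjoint from $h(F)$. Since compositions of perfect maps are perfect, $f_1 \diag f_2 = h \circ g$ is perfect.

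The main obstacle is this binary tube argument: it is the only place where both defining properties of a perfect map (compactness of fibers and closedness) must be used in concert. Everything else --- continuity, fiber compactness of $f$, reduction from infinite to finite $I$, and reduction from finite $I$ to two factors --- is either formal or a clean application of compactness of a single fiber of some $f_{i_0}$.
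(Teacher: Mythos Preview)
The paper does not supply its own proof of this fact; it is stated as a citation to \cite[Theorem~3.7.10]{Engelking}, so there is nothing in the paper to compare your argument against directly. Your proposal is correct: continuity and compactness of fibers are indeed routine, your compactness argument reducing closedness from arbitrary $I$ to finite $J$ is valid (the minor case distinction of whether the finite subfamily with empty intersection contains $F\cap K$ or not is easily absorbed by enlarging $J$ to contain $i_0$), and the binary factorisation $f_1\diag f_2 = (f_1\times\mathrm{id}_{Y_2})\circ(\mathrm{id}_X\diag f_2)$ together with the tube argument for $f_1\times\mathrm{id}_{Y_2}$ is a clean way to finish.

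For comparison, Engelking's own route factors the diagonal product differently: one writes $\diag\{f_i:i\in I\}$ as the diagonal embedding $\Delta:X\to X^I$ (a closed embedding since $X$ is Hausdorff, hence perfect) followed by the Cartesian product map $\prod_{i\in I} f_i:X^I\to\prod_{i\in I} Y_i$, and then invokes the earlier theorem that arbitrary products of perfect maps are perfect. That approach avoids the explicit reduction to finite index sets but pushes the work into the product theorem, which itself requires a tube-type argument. Your reduction-to-binary strategy is more hands-on but equally valid, and has the pedagogical advantage of isolating exactly where compactness of fibers and closedness interact.
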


\begin{fact}
\label{quotients:with:respect:to:compact:group:are:perfect}
{\rm \cite[Chapter II, Theorem 5.18]{HR}}
If $N$ is a compact normal subgroup of a topological group $G$, then 
the quotient map from $G$ onto its quotient group $G/N$ is perfect.
\end{fact}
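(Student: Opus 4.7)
The plan is to verify directly the two defining conditions of a perfect map for the canonical quotient homomorphism $\pi:G\to G/N$, namely (a) compactness of every fiber and (b) closedness of $\pi$.

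Condition (a) is immediate: for $y\in G/N$ pick $g\in\pi^{-1}(y)$, so that $\pi^{-1}(y)=gN$ is the image of the compact set $N$ under the homeomorphism $x\mapsto gx$, and hence is compact.

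The substance of the argument lies in (b). Since $\pi$ is a quotient map, it suffices to show that $FN=\pi^{-1}(\pi(F))$ is closed in $G$ whenever $F\subseteq G$ is closed; equivalently, that $G\setminus FN$ is open. I would fix $g\in G\setminus FN$ and produce an open neighborhood of $g$ missing $FN$. Because $N$ is a subgroup, $N=N^{-1}$, and $g\notin FN$ translates to $gN\cap F=\emptyset$; so each point $gn$ with $n\in N$ lies in the open set $G\setminus F$. Continuity of multiplication provides, for every $n\in N$, open neighborhoods $V_n$ of $g$ and $W_n$ of $n$ with $V_nW_n\subseteq G\setminus F$. Compactness of $N$ extracts a finite subcover $W_{n_1},\dots,W_{n_k}$ of $\{W_n:n\in N\}$, and then $V=\bigcap_{i=1}^{k} V_{n_i}$ is an open neighborhood of $g$ satisfying $VN\subseteq\bigcup_{i=1}^{k}V_{n_i}W_{n_i}\subseteq G\setminus F$, so $V\cap FN=\emptyset$.

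The only mildly subtle step is the ``tube-lemma''-style reduction in the last paragraph, where compactness of $N$ is used to pass from pointwise separation of $g$ from each $gn\notin F$ to a uniform separation of a whole neighborhood of $g$ from all of $FN$. There is no deeper obstacle here, as the entire argument rests only on continuity of the group operations together with compactness of $N$; no further structural facts about $G$ are needed.
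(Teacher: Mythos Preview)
Your proof is correct. The paper does not give its own proof of this fact---it is simply stated with a citation to \cite[Chapter II, Theorem 5.18]{HR}---so there is nothing to compare against; your direct verification of compact fibers together with the tube-lemma argument for closedness is exactly the standard elementary proof one would expect.
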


\begin{fact}
\label{quotient:implies:open}
Let $\pi:G\to H$ be a continuous group homomorphism  
from a topological group $G$ onto a 
topological group $H$.
If $\pi$  
is a quotient map, then $\pi$ is also an open map.
In particular, if $\pi$ is a perfect map, then $\pi$ is an open map.
\end{fact}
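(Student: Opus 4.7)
The plan is to reduce the statement to the basic observation that, for any group homomorphism $\pi$ and any subset $U\subseteq G$, the saturation of $U$ under $\pi$ coincides with $\ker(\pi)\cdot U$.

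First I would handle the main assertion. Fix an open set $U\subseteq G$; I want $\pi(U)$ to be open in $H$. Since $\pi$ is a quotient map, it suffices to show that $\pi^{-1}(\pi(U))$ is open in $G$. A direct computation using the homomorphism property gives
\[
\pi^{-1}(\pi(U))=\{g\in G: \pi(g)\in \pi(U)\}=\ker(\pi)\cdot U=\bigcup_{k\in\ker(\pi)} kU,
\]
because $\pi(g)=\pi(u)$ is equivalent to $gu^{-1}\in\ker(\pi)$. Each set $kU$ is open, since left translation by $k$ is a homeomorphism of $G$, so the union is open in $G$. Thus $\pi(U)$ is open in $H$, proving that $\pi$ is an open map.

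For the ``in particular'' clause I would argue as follows. A perfect map is by definition closed, and a continuous closed surjection between topological spaces is automatically a quotient map (a standard fact: if $A\subseteq H$ has $\pi^{-1}(A)$ closed in $G$, then by closedness $\pi(\pi^{-1}(A))=A$ is closed in $H$, giving the quotient property). Hence the first part applies and yields that $\pi$ is open.

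I do not anticipate any real obstacle here; the entire argument is a one-line kernel-translate computation together with the definition of a quotient map. The only point to be careful about is to phrase the saturation identity correctly so that it transparently produces a union of open translates of $U$.
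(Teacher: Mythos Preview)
Your argument is correct and follows exactly the same outline as the paper's proof: deduce openness from the quotient property, and for the second clause observe that perfect $\Rightarrow$ closed $\Rightarrow$ quotient. The only difference is that the paper merely cites \cite[Chapter~II, Theorem~5.17]{HR} and \cite[Corollary~2.4.8]{Engelking} for these two steps, whereas you spell out the standard kernel--translate computation and the closed-surjection-is-quotient argument explicitly.
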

\begin{proof}
The first statement follows from \cite[Chapter II,
Theorem 5.17]{HR}. To prove the second statement note that  
a prefect map is a closed map, and every closed map is a quotient map
\cite[Corollary 2.4.8]{Engelking}.
\end{proof}

\begin{fact}
\label{locally:compact:subgroups:are:closed}
{\rm \cite[Chapter II, Theorem 5.11]{HR}}
A locally compact subgroup $G$ of a topological group $H$ is closed in $H$. 
\end{fact}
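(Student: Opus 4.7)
The plan is to prove $G = \overline{G}$, where $\overline{G}$ denotes the closure of $G$ in $H$. Since the closure of a subgroup is a subgroup, $\overline{G}$ is a topological group in its own right, and $G$ is a dense subgroup of it. My strategy is to show that $G$ is actually an \emph{open} subgroup of $\overline{G}$; since every open subgroup of a topological group is also closed (its complement being a union of translates of itself), and since $G$ is dense in $\overline{G}$, this will force $G = \overline{G}$, hence $G$ is closed in $H$.

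To show $G$ is open in $\overline{G}$, by translation (homogeneity) it suffices to find a neighborhood of $1_H$ in $\overline{G}$ that is contained in $G$. Using local compactness of $G$, pick an open set $V$ of $G$ with $1_G \in V$ whose closure $K = \overline{V}^{G}$ in $G$ is compact, and write $V = U \cap G$ with $U$ open in $H$. The key observation is that the compactness of $K$ controls the passage from closures in $G$ to closures in $H$: because $H$ is Hausdorff, $K$ is closed in $H$, and since $V \subseteq K \subseteq \overline{V}^{H}$, we obtain $\overline{V}^{H} = K \subseteq G$. Combining this with the elementary topological fact that $U \cap \overline{G}^{\,H} \subseteq \overline{U\cap G}^{\,H} = \overline{V}^{H}$ (which holds for any open $U$ and any set), we conclude that the open neighborhood $U \cap \overline{G}$ of $1_H$ in $\overline{G}$ is already contained in $G$, as required.

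The main obstacle — really the only nontrivial point — is this interplay between closures relative to $G$ and relative to $H$: a priori $\overline{V}^{G}$ may be strictly smaller than $\overline{V}^{H}$, but compactness of $\overline{V}^{G}$ together with Hausdorffness of $H$ collapses the two. Once this is in hand, the rest is a routine application of homogeneity of topological groups and the standard fact that open subgroups are closed.
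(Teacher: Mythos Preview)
Your argument is correct. The paper itself does not supply a proof of this fact; it merely cites \cite[Chapter~II, Theorem~5.11]{HR} and uses the result as a black box. Your write-up is essentially the standard textbook proof (and indeed the one in Hewitt--Ross): pass to the closure $\overline{G}$, use a compact neighbourhood of the identity in $G$ together with Hausdorffness of $H$ to force the $H$-closure of that neighbourhood to stay inside $G$, conclude that $G$ is open in $\overline{G}$, and finish with ``open subgroup $\Rightarrow$ closed subgroup'' plus density. The only step worth a second glance is the inclusion $U\cap\overline{G}\subseteq\overline{U\cap G}^{\,H}$ for $U$ open, and you have identified and justified it correctly.
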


Recall that a topological group $G$ is {\em compactly generated\/} provided that there exists a compact subset $K$ of $G$ such that $G=\grp{K}$.
\begin{fact}
\label{separable:metric:quotient}
{\rm \cite{KakutaniKodaira}}
If $U$ is an open subset of a compactly generated,
locally compact group $G$, then there exists a compact
normal subgroup $N\subseteq U$ of $G$ such that $G/N$ has
a countable base.
\end{fact}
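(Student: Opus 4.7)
The plan is to construct $N$ via a Kakutani--Kodaira style inductive procedure. First I would fix a compact symmetric generating set $K$ for $G$ with $1_G\in K$ (obtained by replacing an arbitrary compact generating set by its union with its inverse and $\{1_G\}$), and then build a decreasing sequence $\{U_n:n<\omega\}$ of compact symmetric neighborhoods of $1_G$ with $U_0\subseteq U$, engineered so that the intersection $N=\bigcap_n U_n$ is a compact normal subgroup with $G/N$ metrizable and separable.

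The heart of the argument is the inductive step. Given $U_n$, consider the continuous map $\phi_n\colon K^n\times G\to G$ defined by $\phi_n(g,h)=ghg^{-1}$. Since $\phi_n(K^n\times\{1_G\})=\{1_G\}\subseteq U_n$ and $K^n$ is compact, the tube lemma yields an open neighborhood $W$ of $1_G$ with $K^n\,W\,(K^n)^{-1}\subseteq U_n$. I would then choose $U_{n+1}$ to be any compact symmetric neighborhood of $1_G$ with $U_{n+1}\subseteq W\cap U_n$ and $U_{n+1}\cdot U_{n+1}\subseteq U_n$, which is possible by local compactness together with continuity of multiplication. This ensures simultaneously that (a) $ab\in U_n$ whenever $a,b\in U_{n+1}$, and (b) $gU_{n+1}g^{-1}\subseteq U_n$ for every $g\in K^n$.

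Property (a) makes $N=\bigcap_n U_n$ closed under multiplication, hence a compact symmetric subgroup of $G$ contained in $U_0\subseteq U$. For normality, any $g\in G$ lies in $K^m$ for some $m$ (as $K$ is a symmetric generating set containing $1_G$), so by (b), $gNg^{-1}\subseteq gU_{n+1}g^{-1}\subseteq U_n$ for every $n\ge m$, giving $gNg^{-1}\subseteq N$. To see that $G/N$ has a countable base, note that the quotient map $\pi\colon G\to G/N$ is perfect (Fact~\ref{quotients:with:respect:to:compact:group:are:perfect}) and hence open (Fact~\ref{quotient:implies:open}); since every open neighborhood of the compact set $N=\bigcap_n U_n$ must contain some $U_n$ (a standard compactness argument), the family $\{\pi(U_n):n<\omega\}$ forms a countable open base at $1_{G/N}$. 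Combined with the fact that $G/N$ inherits local compactness and $\sigma$-compactness from $G$ (the latter because $G$ is compactly generated), first countability of $G/N$ upgrades to metrizability via Birkhoff--Kakutani and then to second countability via separability of $\sigma$-compact metric spaces.

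The main obstacle is the simultaneous coordination of the inductive choices: each $U_{n+1}$ must be small enough to absorb conjugation by the ever-larger compact sets $K^n$, and the tube lemma is the right tool here precisely because compact generation lets us exhaust $G$ by the compact powers of $K$.
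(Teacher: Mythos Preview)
The paper does not actually prove this fact; it merely cites Kakutani--Kodaira and remarks that an elementary, purely topological proof can be found in \cite[Chapter II, Theorem 8.7]{HR}. Your sketch is precisely that classical argument: build a nested sequence of compact symmetric identity neighbourhoods whose squares and $K^n$-conjugates shrink into the previous term, intersect to get a compact normal $N$, and then pass to the quotient. The outline is correct; the only cosmetic points are that (i) the hypothesis must tacitly include $1_G\in U$ (otherwise no subgroup can lie in $U$; this is how the fact is applied later, with $U=U_\alpha$ from a local base at $1_G$), and (ii) for a neighbourhood base at $1_{G/N}$ one should take the images $\pi(\operatorname{int}(U_n))$ rather than $\pi(U_n)$, since the $U_n$ themselves are compact rather than open. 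Neither affects the substance of the argument, which matches the proof the paper points to.
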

We note that in \cite[Chapter II, Theorem 8.7]{HR} one finds a purely 
topological, elementary proof of Fact \ref{separable:metric:quotient} that does not use the structure theory of locally compact groups.

\begin{definition}
{\rm 
If $D$ is an infinite set, then $S(D)=D\cup\{*\}$ will denote 
the one-point compactification of the discrete set of size $|D|$. 
(Here $*\not\in D$.) 
That is, 
all points of $D$ are isolated in $S(D)$, and the family 
$\{S(D)\setminus F: F$ is a finite subset of $D\}$ consists of 
open neighbourhoods of a single non-isolated point $*$. 
}
\end{definition}

Note that $S(D)$ can be characterized as a compact Hausdorff space of size $|D|$
having precisely one non-isolated point.
The relevance of this space 
to our topic can be seen from the following folklore 
\begin{fact}
\label{fact11}
If $X$ is an infinite suitable set for a compact group $G$, then 
the subspace $X\cup\{1_G\}$ 
of $G$ 
is compact and homeomorphic
to the space $S(X)$. 
\end{fact}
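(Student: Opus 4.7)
The plan is to verify the hypotheses of the stated characterization of $S(D)$, namely that $X\cup\{1_G\}$ is a compact Hausdorff space of cardinality $|X|$ with exactly one non-isolated point, and then invoke a standard argument that any continuous bijection from a compact space onto a Hausdorff space is a homeomorphism.

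First I would observe that $X\cup\{1_G\}$ is compact: it is closed in $G$ by condition (ii) of Definition~\ref{suitable:set:definition}, and $G$ is compact. Next I would show that $1_G\notin X$. Indeed, if $1_G\in X$, then $X\cup\{1_G\}=X$ would be a discrete subspace which is also closed in the compact group $G$, hence finite, contradicting the assumption that $X$ is infinite. In particular, $|X\cup\{1_G\}|=|X|$.

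Now I would verify that every $x\in X$ is isolated in $X\cup\{1_G\}$: since $X$ is discrete, pick an open set $U\subseteq G$ with $U\cap X=\{x\}$; since $1_G\ne x$ and $\{1_G\}$ is closed in the Hausdorff group $G$, the set $U\setminus\{1_G\}$ is open in $G$ and meets $X\cup\{1_G\}$ precisely at $\{x\}$. To see that $1_G$ is not isolated, note that otherwise $X\cup\{1_G\}$ would be discrete, hence (being closed in compact $G$) finite, contradicting infinitude of $X$. Thus $1_G$ is the unique non-isolated point of $X\cup\{1_G\}$.

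Finally I would show that every neighbourhood of $1_G$ in $X\cup\{1_G\}$ is cofinite: if $U$ is open in $X\cup\{1_G\}$ and contains $1_G$, then the complement $(X\cup\{1_G\})\setminus U$ is a closed subset of the compact set $X\cup\{1_G\}$, is contained in $X$, and consists entirely of isolated points of $X\cup\{1_G\}$; being compact and discrete it must be finite. Now choose any bijection $\varphi:X\cup\{1_G\}\to S(X)$ with $\varphi(1_G)=*$. At every point of $X$, $\varphi$ is trivially continuous since both points are isolated, while continuity at $1_G$ follows from the cofinite-neighbourhood property just established and the definition of the topology of $S(X)$. Thus $\varphi$ is a continuous bijection from a compact space onto a Hausdorff space, and is therefore a homeomorphism. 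No step appears to present a genuine obstacle; the only slightly delicate point is carefully excluding the possibility $1_G\in X$ before speaking of $1_G$ as the non-isolated point.
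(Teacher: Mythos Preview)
Your argument is correct and follows essentially the same route as the paper's: both show that $X\cup\{1_G\}$ is a compact Hausdorff space of size $|X|$ with exactly one non-isolated point $1_G$, and then identify it with $S(X)$. You are simply more explicit than the paper, which leaves the verifications that $1_G\notin X$ and that points of $X$ remain isolated in $X\cup\{1_G\}$ implicit.

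One small slip worth noting: the property you establish in your penultimate paragraph (every neighbourhood of $1_G$ is cofinite) is the \emph{wrong direction} for continuity of $\varphi$ at $1_G$. What continuity requires is that every cofinite subset of $X\cup\{1_G\}$ containing $1_G$ is open, so that $\varphi^{-1}(S(X)\setminus F)$ is a neighbourhood of $1_G$ for finite $F$; this follows immediately from the space being $T_1$ (finite subsets of $X$ are closed). The direction you actually proved would give continuity of $\varphi^{-1}$ directly, which would in fact make the compact-to-Hausdorff homeomorphism trick unnecessary. Either way the conclusion stands.
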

\begin{proof}
Indeed, 
$X\cup \{1_G\}$ is closed in $G$
by item (ii) of Definition \ref{suitable:set:definition}.
Since $G$ is compact, so is $X\cup \{1_G\}$. 
Since $X$ is an infinite discrete subset of $G$ by item (i) of
Definition \ref{suitable:set:definition}, 
the point $1_G$ cannot be isolated in $X\cup \{1_G\}$
(otherwise $X\cup \{1_G\}$ would become an infinite discrete compact space).
Hence, $X\cup \{1_G\}$ is a compact space with a single non-isolated point
$1_G$, and thus $X\cup \{1_G\}$ is homeomorphic to $S(X)$.
\end{proof}

\begin{fact}
\label{fact12}
Assume that $X$ is a compact space with a single non-isolated point $x$ and 
$f:X\to Y$ is a continuous surjection of $X$ onto an infinite space $Y$. Then $Y$ is a compact space with a single non-isolated point $f(x)$. 
\end{fact}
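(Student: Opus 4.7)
The plan is to verify three separate assertions: (1) $Y$ is compact; (2) every $y \in Y$ with $y \neq f(x)$ is isolated in $Y$; (3) $f(x)$ is not isolated in $Y$.

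Assertion (1) is immediate: the continuous image of the compact space $X$ is compact. For assertion (3), observe that once (2) is established, if $f(x)$ were also isolated, then $Y$ would be an infinite discrete compact Hausdorff space, which is impossible. Hence the entire content of the proof lies in (2).

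For (2), fix $y \in Y$ with $y \neq f(x)$. Since $f$ is continuous and $\{y\}$ is closed in the Hausdorff space $Y$, the fiber $f^{-1}(y)$ is closed in $X$, hence compact. Moreover $f^{-1}(y) \subseteq X \setminus \{x\}$, so every point of $f^{-1}(y)$ is isolated in $X$. A compact space consisting entirely of points that are isolated in an ambient space (and hence in itself) must be finite. In particular $f^{-1}(y)$ is a finite union of isolated points, so it is open in $X$, and therefore $X \setminus f^{-1}(y)$ is closed in $X$.

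Now $X \setminus f^{-1}(y)$ is compact, so its image $f(X \setminus f^{-1}(y))$ is compact, hence closed in $Y$. Using the surjectivity of $f$, one checks that $f(X \setminus f^{-1}(y)) = Y \setminus \{y\}$: no point of the complement maps to $y$, and every $w \in Y \setminus \{y\}$ has some preimage, which must lie outside $f^{-1}(y)$. Thus $Y \setminus \{y\}$ is closed, so $\{y\}$ is open, i.e., $y$ is isolated in $Y$. The only nontrivial step is recognizing that the image of $X \setminus f^{-1}(y)$ equals $Y \setminus \{y\}$ (which uses surjectivity), after which compactness does all the remaining work.
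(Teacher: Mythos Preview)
Your proof is correct, but the route differs from the paper's. The paper establishes a single auxiliary fact first---namely that $Y\setminus V$ is finite for every open neighborhood $V$ of $f(x)$---by pulling $V$ back to an open set containing $x$ and noting that its complement in $X$ is a compact set of isolated points, hence finite. From this ``cofinite neighborhood'' property the paper then reads off all three conclusions: $f(x)$ is non-isolated because each such $V$ is infinite; $Y$ is compact because any open cover contains a $V\ni f(x)$ whose complement is finite; and each $y\neq f(x)$ is isolated because a Hausdorff-separating neighborhood of $y$ lies in some $Y\setminus V$ and is therefore finite. Your argument instead handles the three points independently: compactness comes for free as a continuous image; isolation of $y\neq f(x)$ is obtained by showing the fiber $f^{-1}(y)$ is finite and open, so that $f(X\setminus f^{-1}(y))=Y\setminus\{y\}$ is compact and hence closed; and non-isolation of $f(x)$ follows by contradiction from the impossibility of an infinite compact discrete space. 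Your approach is slightly more economical on the compactness step and avoids the explicit Hausdorff separation argument, while the paper's single ``cofinite'' lemma gives a unified picture closer to the one-point-compactification description of $Y$.
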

\begin{proof} 
We are going to show first that $Y\setminus V$ is finite for every
open subset $V$ of $Y$ containing $f(x)$. 
Indeed,
since $f:X\to Y$ is continuous, $U=f^{-1}(V)$ is an open subset of $X$ containing $x$. 
Since every point of $X$ different from $x$ is isolated, $X\setminus U$ consists of isolated points of $X$. Since $X$ is compact, we conclude that the set $X\setminus U$ is finite. Therefore, the set $Y\setminus V$ must be finite 
as well.
Since $Y$ is an infinite set, $V$ must be infinite. Thus, $f(x)$ is a non-isolated point of $Y$. 

Let us show next that $Y$ is compact. Let $\mathcal{V}$ be an open cover of $Y$. There exists $V\in\mathcal{V}$ such that $f(x)\in V$. For every $y\in Y\setminus V$ choose $V_y\in \mathcal{V}$ with $y\in V_y$. Now $\{V_y:y\in Y\setminus V\}\cup\{V\}$ is a finite subcover of $\mathcal{V}$.

Finally, let $y\in Y\setminus \{f(x)\}$. Since $Y$ is Hausdorff, there exist
open subsets $W$ and $V$ of $Y$ such that $y\in W$, $f(x)\in V$ and $W\cap V=\emptyset$. Then $W\subseteq Y\setminus V$, and hence $W$ is finite. Since every singleton is a closed subset of $Y$, it now follows that $y$ is an isolated point of $Y$.
\end{proof}

Our next lemma,  which is in a certain sense the ``converse'' of 
Fact \ref{fact11}, is  the key to building suitable sets in (compact-like) topological groups. 
\begin{lemma}
\label{building:suitable:sets}
Suppose that $G$ is a topological group, $X$ is an infinite set and 
$f:S(X)\to G$ is a continuous map such that $f(*)=1_G$
and $\grp{f(S(X))}$ is dense in $G$. Then 
$S=f(S(X))\setminus \{1_G\}$ is a suitable set for $G$ such that
$S\cup\{1_G\}$ is compact. 
\end{lemma}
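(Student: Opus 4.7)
The plan is to reduce the lemma to a direct application of Fact \ref{fact12}, after a short case split on whether the image $Y = f(S(X))$ is finite or infinite.

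First I would observe the common, easy facts. In both cases $\langle S\rangle = \langle Y\rangle$, because $S = Y\setminus\{1_G\}$ and adjoining or removing the identity does not change the generated subgroup; so the density hypothesis on $\langle f(S(X))\rangle = \langle Y\rangle$ immediately yields that $\langle S\rangle$ is dense in $G$. Thus only items (i) and (ii) of Definition \ref{suitable:set:definition}, together with compactness of $S\cup\{1_G\}$, need attention.

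In the (degenerate) case where $Y$ is finite, $S\subseteq Y$ is finite. Since $G$ is Hausdorff every finite subset of $G$ is closed and discrete, so $S$ is discrete in $G$ and $S\cup\{1_G\}$ is closed in $G$; it is also compact, being finite. In the main case where $Y$ is infinite, I would apply Fact \ref{fact12} to the continuous surjection $f:S(X)\to Y$: the domain $S(X)$ is a compact Hausdorff space with the single non-isolated point $*$, and $f(*)=1_G$, so Fact \ref{fact12} gives that $Y$ is compact with a single non-isolated point $1_G$. Consequently every point of $S=Y\setminus\{1_G\}$ is isolated in $Y$, which (since $Y$ carries the subspace topology from $G$) means $S$ is a discrete subspace of $G$. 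Compactness of $Y=S\cup\{1_G\}$ inside the Hausdorff group $G$ yields that $S\cup\{1_G\}$ is closed in $G$ and compact, verifying the remaining requirements.

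Since the lemma reduces so cleanly to Fact \ref{fact12}, there is no real obstacle; the only point that requires a moment of care is the trivial case where $f$ has finite image (which cannot be excluded a priori even though $X$ is infinite, as $f$ may collapse many points), and noting that passing between $S$, $Y$, and $S\cup\{1_G\}$ preserves the generated subgroup.
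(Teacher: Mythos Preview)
Your proof is correct and follows essentially the same approach as the paper: the same case split on whether $f(S(X))$ is finite or infinite, the same appeal to Fact \ref{fact12} in the infinite case, and the same observation that $\langle S\rangle=\langle f(S(X))\rangle$ handles density. The only cosmetic difference is that you make explicit the Hausdorff reasoning (finite sets are closed and discrete; compact sets are closed), which the paper leaves implicit.
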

\begin{proof}
Suppose first that $f(S(X))$ is a finite set. Then $S$ is discrete,
$S\cup\{1_G\}$ is compact and closed (being finite), and 
$\grp{S}=\grp{S\cup\{1_G\}}=\grp{f(S(X))}$ is dense in $G$.
Therefore, $S$ is a suitable set for $G$.

Assume now that $f(S(X))$ is infinite.
As an infinite
 continuous image of the compact space $S(X)$ with a single non-isolated 
point $*$, the space $f(S(X))$ is also a compact space with a single non-isolated point $f(*)=1_{G}$ (Fact \ref{fact12}). 
Therefore, $S=f(S(X))\setminus \{1_{G}\}$
is a discrete set and $S\cup \{1_{G}\}$ is compact (and thus closed in $G$).
Moreover, 
$\grp{S}=\grp{f(S(X)\setminus \{1_{G}\})}=\grp{f(S(X))}$.
Since the latter set  is dense in $G$,
we conclude that $S$ is a suitable set for $G$.
\end{proof}

Note that $S(\mathbb{N})$ is (homeomorphic to) a non-trivial convergence sequence together with its limit. 
The next fact is a key ingredient in our proof, so to make our manuscript self-contained we include its proof adapted from \cite{FSh}.
\begin{fact}
\label{Fujita:Sh}
{\rm \cite{FSh}}
Let $G$ be a compactly generated 
metric group.
Then there exists a continuous map $f:S(\mathbb{N})\to G$ such that 
$f(*)=1_G$ and $\grp{f(S(\mathbb{N}))}$ is dense in $G$.
\end{fact}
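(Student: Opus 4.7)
The plan is to construct, inductively, an increasing chain $\{1_G\}=H_0\subseteq H_1\subseteq H_2\subseteq\cdots$ of finitely generated subgroups of $G$ whose union is dense in $G$, in such a way that only \emph{small} correction elements are added at each stage. Concatenating these corrections will yield a null sequence generating a dense subgroup.

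First, I would pick a compact symmetric set $K\subseteq G$ with $1_G\in K$ and $\langle K\rangle=G$, and fix a left-invariant metric $\rho$ compatible with the topology (Birkhoff--Kakutani). Choose a decreasing sequence $\varepsilon_n\downarrow 0$ with $\varepsilon_0>\sup_{k\in K}\rho(k,1_G)$, so that $H_0=\{1_G\}$ is trivially $\varepsilon_0$-dense in $K$. By compactness of $K$, for each $n\ge 1$ select a finite $(\varepsilon_n/2)$-net $B_n\subseteq K$.

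For the inductive step, assume $H_{n-1}$ is $\varepsilon_{n-1}$-dense in $K$. For each $b\in B_n$ choose $h_b\in H_{n-1}$ with $\rho(h_b,b)<\varepsilon_{n-1}$ and define $v_b:=h_b^{-1}b$; the left-invariance of $\rho$ yields $\rho(v_b,1_G)=\rho(b,h_b)<\varepsilon_{n-1}$. Let $H_n:=\langle H_{n-1}\cup\{v_b:b\in B_n\}\rangle$. Then $b=h_b\cdot v_b\in H_n$ for every $b\in B_n$, so $B_n\subseteq H_n$, which is therefore $(\varepsilon_n/2)$-dense in $K$, closing the induction.

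Finally, enumerate the (countably many, finitely many per stage) correction elements $v_b$ as a single sequence $(x_k)_{k=1}^\infty$ respecting the order of the stages, and set $f(k)=x_k$, $f(*)=1_G$. Since every element introduced at stage $n$ lies within $\rho$-distance $\varepsilon_{n-1}$ of $1_G$ and $\varepsilon_{n-1}\to 0$, the sequence $(x_k)$ converges to $1_G$, so $f:S(\mathbb{N})\to G$ is continuous. Moreover $\langle f(S(\mathbb{N}))\rangle=\bigcup_n H_n$ is $\varepsilon_n$-close to every point of $K$ for each $n$, so its closure is a closed subgroup of $G$ containing $K$, hence containing $\langle K\rangle=G$; thus $\langle f(S(\mathbb{N}))\rangle$ is dense in $G$.

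The central difficulty is reconciling two competing demands: the newly introduced generators at stage $n$ must be small enough for the overall sequence to converge to $1_G$, yet the enlarged subgroup $H_n$ has to contain the possibly \emph{large} test points $b\in B_n\subseteq K$. The telescoping decomposition $b=h_b\cdot v_b$, enabled by the inductive $\varepsilon_{n-1}$-density of $H_{n-1}$, resolves this tension by absorbing the ``bulk'' of $b$ into the existing subgroup and leaving only the small correction $v_b$ as a genuinely new generator.
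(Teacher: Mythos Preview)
Your proof is correct and uses essentially the same telescoping idea as the paper: at each stage, compactness of $K$ yields finitely many test points, and the inductive hypothesis lets you write each one as (something already generated)$\cdot$(small correction), so that the new generators shrink to $1_G$. The only differences are cosmetic: the paper works with a neighbourhood base $\{V_n\}$ and the invariant $G\subseteq\langle E_0\cup\cdots\cup E_n\cup V_{n+1}\rangle$, and it certifies density by absorbing a fixed countable dense set $D=\{d_n\}$ one element at a time, whereas you work metrically with $\varepsilon_n$-nets in $K$ and deduce density from $K\subseteq\overline{\bigcup_n H_n}$.
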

\begin{proof}
Fix a local base $\{V_n:n\in\N\}$ at $1_G$ such that
$V_0=G$ and $V_{n+1}\subseteq V_n$ for all $n\in\mathbb{N}$.
Let $G=\grp{K}$, where $K$ is a compact subset of $G$.
One can easily see that $G$ is separable, so 
let $D=\{d_n:n\in\N\}$ be a countable dense subset of $G$.

Fix $n\in\mathbb{N}$. Since 
$\{xV_{n+1}:x\in G\}$ is an open cover of $G$ and $K$ is a compact subset of $G$,
$K\subseteq \bigcup\{xV_{n+1}:x\in F_n\}$ for some finite set $F_n$.
Now we have 
\begin{equation}
\label{EQ:1}
G=\grp{K}\subseteq \grp{\bigcup\{xV_{n+1}:x\in F_n\}}\subseteq 
\grp{F_n\cup V_{n+1}}.
\end{equation}

By induction on $n$ we will define 
a sequence $\{E_n:n\in\N\}$ of finite subsets 
of $G$ with the following properties:

(i$_n$) $E_n\subseteq V_n$,

(ii$_n$) $G\subseteq\langle E_0\cup E_1\cup\cdots\cup E_{n}\cup V_{n+1}\rangle$, and

(iii$_n$) $d_{n}\in\langle E_0\cup E_1\cup\cdots\cup E_{n}\rangle$.  

To begin with, note that the set $E_0=F_0\cup\{d_0\}$ satisfies all
three conditions (i$_0$)--(iii$_0$).  
Suppose that we have already defined 
finite sets $E_0$,$E_1$,\dots, $E_{n-1}$ such 
that conditions (i$_0$), $\dots$, (i$_{n-1}$),
(ii$_0$), $\dots$, (ii$_{n-1}$) and
(iii$_0$), $\dots$, (iii$_{n-1}$) are satisfied.
Condition (ii$_{n-1}$) implies that
$$
F_n\cup\{d_n\}\subseteq \langle E_0\cup E_1\cup\cdots\cup E_{n-1}\cup V_{n}\rangle,
$$
and since $F_n$ is finite,
we can find a finite set $E_{n}\subseteq V_n$ 
such that 
\begin{equation}
\label{EQ:2}
F_n\cup\{d_n\}\subseteq
\grp{E_0\cup E_1\cup \cdots\cup E_{n-1}\cup E_n}.  
\end{equation}
Conditions (i$_n$) and (iii$_n$) are clear, and 
(ii$_n$) follows from (\ref{EQ:1}) and (\ref{EQ:2}).

From
(i$_n$) for $n\in\mathbb{N}$ 
it follows that the set 
$S=\bigcup\{E_n:n\in\N\}$ forms a sequence converging to $1_G$. 
Since (iii$_n$) holds for every $n\in\mathbb{N}$,
we get $D\subseteq \grp{S}$, and so $\grp{S}$ is dense in $G$.
Now take any bijection $f:\mathbb{N}\to S$ and define also $f(*)=1_G$. 
\end{proof}

Recall that a {\it set-valued map\/} is a map
$F:Y\to Z$ which assigns to every point
$y\in Y$ a non-empty closed subset $F(y)$ of $Z$. This set-valued map is
{\it \lsc\/}
if
$V=\{y\in Y:F(y)\cap U\neq\emptyset\}$
is open in $Y$ for every open subset $U$ of $Z$.
A (single-valued)
map $f:Y\to Z$ is called a {\it selection of $F$\/}
provided that 
$f(y)\in F(y)$ for all $y\in Y$.

We finish this section with the following special case of
Michael's selection theorem \cite[Theorem 2]{Michael} (see also \cite{RS}).

\begin{fact}
\label{michael:selection:theorem}
A \lsc\ set-valued map $F:Y\to Z$ from a zero-dimensional
(para)com\-pact space $Y$ 
to a complete metric space $Z$ has a continuous selection $f:Y\to Z$.
\end{fact}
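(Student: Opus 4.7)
The plan is to construct the selection $f:Y\to Z$ as a uniform limit of a sequence of continuous ``approximate selections'' $f_n:Y\to Z$ satisfying
\[
d(f_n(y),F(y))<2^{-n}\quad\text{and}\quad d(f_{n+1}(y),f_n(y))<2^{-n}
\]
for every $y\in Y$. Summability of the geometric series $\sum 2^{-n}$ makes $(f_n)$ uniformly Cauchy, so by completeness of $Z$ the limit $f=\lim f_n$ exists and is continuous; since each $F(y)$ is closed and $d(f_n(y),F(y))\to 0$, the limit automatically satisfies $f(y)\in F(y)$.

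The core of the argument is a one-step improvement: given $\varepsilon>0$ and a continuous $g:Y\to Z$ with $d(g(y),F(y))<\varepsilon$ for all $y$, produce a continuous $h:Y\to Z$ with $d(h(y),F(y))<\varepsilon/2$ and $d(h(y),g(y))<\varepsilon$. For each $y\in Y$ fix $z_y\in F(y)$ with $d(z_y,g(y))<\varepsilon$, and set
\[
U_y=\{y'\in Y:d(g(y'),z_y)<\varepsilon\ \text{and}\ F(y')\cap B(z_y,\varepsilon/2)\neq\emptyset\}.
\]
The first condition defines an open set by continuity of $g$, and the second by lower semicontinuity of $F$, so each $U_y$ is open; since $z_y\in F(y)\cap B(z_y,\varepsilon/2)$, it contains $y$. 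Using the combination of zero-dimensionality and (para)compactness of $Y$, refine the open cover $\{U_y:y\in Y\}$ to a partition $\{W_\alpha:\alpha\in A\}$ of $Y$ into pairwise disjoint clopen sets. For each $\alpha$ choose $y_\alpha$ with $W_\alpha\subseteq U_{y_\alpha}$ and define $h(y)=z_{y_\alpha}$ for $y\in W_\alpha$. Being locally constant on a clopen partition, $h$ is continuous, and the two defining conditions of $U_{y_\alpha}$ yield the required inequalities directly.

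A base function $f_0$ with $d(f_0(y),F(y))<1$ is produced by the same clopen-partition technique applied to the open cover $\{F^{-1}(B(z,1)):z\in Z\}$ of $Y$, which covers $Y$ by lower semicontinuity (no prior approximation is needed to select a representative value on each piece of the refining partition). Iterating the one-step improvement with $\varepsilon_n=2^{-n}$ starting from $f_0$ produces the sequence $(f_n)$, and the limit argument above finishes the proof. The principal technical hurdle is the extraction of the pairwise disjoint clopen refinement of $\{U_y:y\in Y\}$; this is precisely where zero-dimensionality and (para)compactness are both essential, and once it is in place the rest of the construction and the passage to the limit are routine consequences of the completeness of $Z$ together with the closedness of the values $F(y)$.
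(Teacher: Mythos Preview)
The paper does not supply a proof of this fact; it is quoted as a known special case of Michael's selection theorem, with references to the literature. Your argument is correct and is essentially Michael's original proof specialised to the zero-dimensional setting, where locally constant maps on a clopen partition take the place of the partitions of unity used in the convex-valued version. The iterative improvement step and the passage to the uniform limit are both carried out correctly.

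One remark on the step you single out as the ``principal technical hurdle'': extracting a pairwise disjoint clopen refinement of an arbitrary open cover of $Y$ is precisely the assertion that $Y$ has covering dimension $\le 0$. For the compact zero-dimensional spaces to which the paper actually applies this fact (namely $S(X)$ and its closed subspaces), this is immediate: pass to a finite subcover by basic clopen sets and disjointify. In the general paracompact case one must read ``zero-dimensional'' as $\dim Y=0$; having a clopen base (small inductive dimension zero) together with paracompactness does not by itself guarantee such refinements, as Roy's metric space with $\mathrm{ind}=0$ and $\dim=1$ shows. Under the intended hypothesis your argument goes through without change.
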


\section{Lifting lemmas based on Michael's selection theorem}

\begin{lemma}
\label{selection:lemma}
Suppose that $K_0, K_1$ are topological groups, 
$N$ is a subgroup of the product $K_0\times K_1$,
and for each $i=0,1$ let $q_i=p_i\restr_{N}:N\to K_i$ be the restriction 
to $N$ of 
the projection $p_i:K_0\times K_1\to K_i$ onto the $i$th coordinate.
Assume also that:
\begin{itemize}
\item[\rm{(1)}]
$K_i=p_i(N)$ for each $i=0,1$,
\item[\rm{(2)}]
$q_0$ is an open map,
\item[\rm{(3)}]
$q_1$ is a closed map,
\item[\rm{(4)}]
$K_1$ is a compete metric space,
\item[\rm{(5)}]
$Y$ is a (para)compact zero-dimensional space and $h:Y\to K_0$ is a continuous map.
\end{itemize}
Then there exists a continuous map $g:Y\to N$ such that
$h=q_0\circ g$.
\end{lemma}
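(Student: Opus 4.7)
The plan is to set up a lower semicontinuous set-valued map and apply Michael's selection theorem (Fact \ref{michael:selection:theorem}). The natural candidate for a continuous lift $g$ is $g(y)=(h(y),\varphi(y))$ for some continuous $\varphi:Y\to K_1$, so the task reduces to selecting $\varphi(y)$ from the fiber of $K_1$-coordinates compatible with $h(y)$.

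Concretely, I would define the set-valued map $F:Y\to K_1$ by
$$
F(y)=q_1\bigl(q_0^{-1}(h(y))\bigr)=\{k\in K_1:(h(y),k)\in N\}.
$$
I then need to verify three properties of $F$. First, $F(y)\neq\emptyset$: this is immediate from surjectivity of $q_0$ (assumption (1)). Second, $F(y)$ is closed in $K_1$: since $q_0$ is continuous, $q_0^{-1}(h(y))$ is closed in $N$, and then assumption (3) that $q_1$ is a closed map forces $F(y)=q_1(q_0^{-1}(h(y)))$ to be closed in $K_1$. Third, $F$ is lower semicontinuous: for an open $U\subseteq K_1$, one computes
$$
\{y\in Y:F(y)\cap U\neq\emptyset\}=h^{-1}\bigl(q_0(q_1^{-1}(U))\bigr),
$$
and this is open because $q_1^{-1}(U)$ is open in $N$ (by continuity of $q_1$), $q_0$ is an open map by assumption (2), and $h$ is continuous.

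With these verified, Fact \ref{michael:selection:theorem} applies: $Y$ is (para)compact zero-dimensional by (5) and $K_1$ is a complete metric space by (4), so $F$ admits a continuous selection $\varphi:Y\to K_1$ with $\varphi(y)\in F(y)$ for every $y\in Y$. I then set $g:Y\to K_0\times K_1$ to be the diagonal product $g=h\diag\varphi$, i.e.\ $g(y)=(h(y),\varphi(y))$. Continuity of $g$ follows from continuity of $h$ and $\varphi$. By the definition of $F$, the condition $\varphi(y)\in F(y)$ means precisely that $(h(y),\varphi(y))\in N$, so $g$ takes values in $N$ and therefore, regarded as a map $g:Y\to N$ into the subspace $N$, is continuous. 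Finally, $q_0(g(y))=p_0(h(y),\varphi(y))=h(y)$, so $h=q_0\circ g$, as required.

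The main obstacle is the verification that $F$ is lower semicontinuous with closed non-empty values; this is exactly where all four group-theoretic hypotheses (1)--(4) get used (surjectivity of $q_0$ for non-emptiness, closedness of $q_1$ for closed values, openness of $q_0$ for lower semicontinuity, and completeness/metrizability of $K_1$ for Michael's theorem). Once $F$ is in place, the rest is a formal application of Fact \ref{michael:selection:theorem} and the definition of the diagonal product.
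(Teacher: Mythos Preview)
Your proof is correct and essentially identical to the paper's: the same set-valued map $F(y)=\{k\in K_1:(h(y),k)\in N\}$ is defined, the same three properties (nonempty, closed, lower semicontinuous) are verified using hypotheses (1)--(3) in the same way, and Michael's theorem is invoked to produce the selection used to build $g$. The only differences are cosmetic---you write the fibers as $q_0^{-1}(h(y))$ and $q_1^{-1}(U)$ where the paper writes $N\cap(\{h(y)\}\times K_1)$ and $N\cap(K_0\times U)$, and you call the selection $\varphi$ rather than $f$.
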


\begin{proof}
For $y\in Y$ define
$F(y)=\{z\in K_1: (h(y),z)\in N\}$.
Note that $N\cap(\{h(y)\}\times K_1)$ is a closed subset of $N$, and 
so the set $F(y)=q_1(N\cap(\{h(y)\}\times K_1))$ 
must be closed in $q_1(N)=p_1(N)=K_1$ by (1) and (3).
Since $h(y)\in K_0=p_0(N)$ by (1) and (5), it follows that
$F(y)\neq\emptyset$.
Therefore $F:Y\to K_1$ is a set-valued map. 

We claim that $F$ is \lsc.
Indeed, let $U$ be an open subset of $K_1$.
Since 
$N\cap (K_0\times U)$ is an open subset of $N$, 
$q_0(N\cap (K_0\times U))$ is an open subset of $q_0(N)=p_0(N)=K_0$
by (1) and (2).
Since $h:Y\to K_0$ is a continuous map by (5),
$V=h^{-1}(q_0(N\cap (K_0\times U)))$ is an open subset of $Y$. 
Now note that $V=\{y\in Y:F(y)\cap U\neq\emptyset\}$ by definitions of $F$ and $V$. 

In view of (4), the assumptions of Fact \ref{michael:selection:theorem}
are satisfied if one takes $K_1$ as $Z$.
Let $f:Y\to K_1$ be a (single-valued) continuous selection of $F$
which exists by the conclusion of Fact \ref{michael:selection:theorem}.

Define $g:Y\to K_0\times K_1$ by $g(y)=(h(y),f(y))$ for $y\in Y$.
Since both $h$ and $f$ are continuous, so is $g$.
If
$y\in Y$, then 
$g(y)=(h(y),f(y))\in \{h(y)\}\times F(y)$ because $f$ is a selection of $F$,
which yields 
$g(y)\in N$ by the definition of $F(y)$. Therefore, $g(Y)\subseteq N$.
The equality $h=q_0\circ g$ is obvious from our definition of $g$.
\end{proof}

In the sequel we will only need a
particular case when the previous lemma is applicable:

\begin{lemma}
\label{main:lemma}
Suppose that $G$ is a locally compact group, 
$K_0$ is a topological group, $K_1$ is a metric group,
$\chi_i:G\to K_i$ is a continuous group homomorphism for $i=0,1$,
$\chi=\chi_0\diag\chi_1:G\to K_0\times K_1$ is the diagonal product of 
maps $\chi_0$ and $\chi_1$, and $N=\chi(G)$.
Assume also that:
\begin{itemize}
\item[\rm{(a)}]
$K_i=\chi_i(G)$ for each $i=0,1$,
\item[\rm{(b)}]
each $\chi_i$ is a perfect map,
\item[\rm{(c)}]
$Y$ is a (para)compact zero-dimensional space and $h:Y\to K_0$ is 
a continuous map.
\end{itemize}
Then there exists a continuous map $g:Y\to N$ such that
$h=q_0\circ g$, where 
$q_0=p_0\restr_{N}:N\to K_0$ is the restriction 
to $N$ of 
the projection $p_0:K_0\times K_1\to K_0$.
\end{lemma}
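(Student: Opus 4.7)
The plan is to derive Lemma \ref{main:lemma} as a direct application of Lemma \ref{selection:lemma}, by verifying that the hypotheses of the latter are satisfied under the assumptions of the former. The bulk of the work is checking conditions (1)--(4) of Lemma \ref{selection:lemma}, since (5) is literally given by (c), and since $N=\chi(G)$ is automatically a subgroup of $K_0\times K_1$ because $\chi$ is a group homomorphism.

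First, I would dispatch condition (1): $p_i(N)=p_i(\chi(G))=\chi_i(G)=K_i$ for $i=0,1$ by the definition of the diagonal product together with assumption (a). Next, the crucial step is to upgrade the perfectness of the $\chi_i$ to perfectness of the $q_i$. By Fact \ref{diagonal:of:perfect:maps}, the map $\chi=\chi_0\diag\chi_1:G\to K_0\times K_1$ is perfect, and since $\chi(G)=N$, the corestriction $\chi:G\to N$ is a continuous surjection that remains perfect: its fibers are unchanged, and for a closed set $C\subseteq G$ the image $\chi(C)$ is closed in $K_0\times K_1$, hence closed in $N$. Because $q_i\circ\chi=\chi_i$ is perfect by (b), Fact \ref{perfect:is:divisible} then forces $q_i:N\to K_i$ to be perfect as well for both $i=0,1$.

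From this I can read off conditions (2) and (3). Condition (3) is immediate: $q_1$ is perfect, hence closed. For (2), note that $q_0:N\to K_0$ is a continuous surjective group homomorphism between topological groups that happens to be perfect, so Fact \ref{quotient:implies:open} gives that $q_0$ is open. It remains to justify (4). By (b) and the fact that $G$ is locally compact, $K_1=\chi_1(G)$ is the perfect image of a locally compact space, hence locally compact; combined with the assumption that $K_1$ is a metric group, this makes $K_1$ a locally compact metrizable space, which is therefore completely metrizable and may legitimately play the role of the space $Z$ in Fact \ref{michael:selection:theorem}.

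With (1)--(5) of Lemma \ref{selection:lemma} verified, a direct invocation of that lemma produces a continuous $g:Y\to N$ satisfying $h=q_0\circ g$, which is exactly the desired conclusion. I expect the main subtle point to be the passage from perfectness of $\chi_i$ to perfectness of $q_i$ (requiring one to observe that corestricting $\chi$ to $N$ preserves perfectness and then to apply Fact \ref{perfect:is:divisible}); everything else is essentially bookkeeping, modulo the standard observation that a locally compact metrizable space is completely metrizable.
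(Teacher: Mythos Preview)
Your proposal is correct and follows essentially the same route as the paper: verify hypotheses (1)--(5) of Lemma~\ref{selection:lemma} and invoke it. The only minor difference is that you spend effort showing $\chi:G\to N$ is perfect via Fact~\ref{diagonal:of:perfect:maps}, whereas the paper (and Fact~\ref{perfect:is:divisible} itself) only needs $\chi:G\to N$ to be a continuous surjection together with the perfectness of $\chi_i=q_i\circ\chi$; your extra step is harmless but unnecessary.
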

\begin{proof}
It suffices to check that $N$, $Y$ and $h$ 
satisfy all the assumptions of Lemma
\ref{selection:lemma}. (1) follows from (a).
Let $i=0,1$.
Since both $\chi:G\to N$ and $q_i:N\to K_i$ are surjections,  
$\chi_i=q_i\circ \chi$ and $\chi_i$ is a perfect map by item (b),
$q_i$ is a perfect map
(Fact \ref{perfect:is:divisible}), and so also an open map 
(Fact \ref{quotient:implies:open}).
This yields both (2) and (3).
Being an open continuous image of a locally compact space $G$, $K_1$ is locally compact. Since a locally compact metric space 
admits a complete metric, we get (4).
Finally, (5) coincides with (c). Now the conclusion of our lemma follows 
from the conclusion of Lemma \ref{selection:lemma}.
\end{proof}

\section{Proof of Theorem \ref{Hofmann-Morris:theorem}}

If $G$ and $H$ are groups and $f:G\to H$ is a group homomorphism, then 
$\ker f=\{x\in G:f(x)=1_H\}$ denotes the {\em kernel\/} of $f$. 
Obviously, $\ker f$ is a normal subgroup of $G$.

We are now ready to prove a specific version of Theorem \ref{Hofmann-Morris:theorem}.
Our proof is based on representing a compactly generated, locally compact
group as a limit of some inverse spectra (aka a projective limit in the terminology of algebraists) of 
locally compact separable metric groups. In order to make an exposition easier to comprehend for readers not familiar with inverse (aka projective) limits, we have chosen the presentation using diagonal products of maps, thereby allowing for a much simpler visualiziation of such a limit.

\begin{theorem}
\label{main:theorem}
Let $G$ be a topological group generated by its open subset with %the 
compact
 closure.
Then $G$ has a suitable set $S$ such that $S\cup\{1_G\}$ is compact.  
\end{theorem}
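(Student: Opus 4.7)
The plan is to produce a continuous map $f\colon S(X)\to G$ with $f(*)=1_G$ and $\langle f(S(X))\rangle$ dense in $G$, after which Lemma \ref{building:suitable:sets} yields the required suitable set $S$ with $S\cup\{1_G\}$ compact. If $G$ is already second countable (equivalently, metric), Fact \ref{Fujita:Sh} furnishes such an $f$ directly with $X=\mathbb{N}$; the work lies in the general case.

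The framework is to realise $G$ as a closed subgroup of a product of separable metric locally compact groups and perform a transfinite construction along the coordinates. I would fix a local base $\{U_\alpha:\alpha<\tau\}$ at $1_G$ with $\tau=w(G)$; for each $\alpha$ Fact \ref{separable:metric:quotient} supplies a compact normal subgroup $N_\alpha\subseteq U_\alpha$ with $K_\alpha:=G/N_\alpha$ second countable, hence metric. The quotient homomorphism $\chi_\alpha\colon G\to K_\alpha$ is perfect by Fact \ref{quotients:with:respect:to:compact:group:are:perfect}, and by Fact \ref{diagonal:of:perfect:maps} the diagonal product $\chi=\triangle_\alpha\chi_\alpha\colon G\to\prod_\alpha K_\alpha$ is perfect. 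Since $\bigcap_\alpha N_\alpha\subseteq\bigcap_\alpha U_\alpha=\{1_G\}$, $\chi$ is injective, hence a closed embedding of $G$ onto its image; it therefore suffices to build $f$ into that image.

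The construction is a transfinite recursion on $\alpha<\tau$ maintaining a continuous map $g_\alpha\colon S(X_\alpha)\to M_\alpha$, where $M_\alpha=(\triangle_{\beta\le\alpha}\chi_\beta)(G)$, with $g_\alpha(*)=1$, coherent projections to earlier stages, and image generating a dense subgroup of $M_\alpha$. At a successor step $\alpha\to\alpha+1$, Lemma \ref{main:lemma} is applied with $K_0:=M_\alpha$, $K_1:=K_{\alpha+1}$, $h:=g_\alpha$ to lift $g_\alpha$ to a continuous map into $M_{\alpha+1}\subseteq M_\alpha\times K_{\alpha+1}$; the hypotheses are met because $S(X_\alpha)$ is zero-dimensional compact, $K_{\alpha+1}$ is metric, and both component homomorphisms are perfect. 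One then enlarges $X_\alpha$ by a fresh copy of $\mathbb{N}$, and $g_{\alpha+1}$ is defined on that copy so that its $K_{\alpha+1}$-coordinate traces out the dense generating sequence supplied by Fact \ref{Fujita:Sh} applied to $K_{\alpha+1}$, thereby ensuring density of $\langle g_{\alpha+1}(S(X_{\alpha+1}))\rangle$ in $M_{\alpha+1}$.

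The hard part will be both the mechanics of this second half of the successor step and the global continuity of the assembled $f$. Lemma \ref{main:lemma} does not apply symmetrically to lift the new $K_{\alpha+1}$-generators back into $M_{\alpha+1}$, because at uncountable stages $\alpha$ the accumulated factor $M_\alpha$ is no longer metric and so cannot play the role of the metric factor $K_1$ required by the lemma; the lift must instead be obtained by a direct selection from the compact fibres of the perfect projection $M_{\alpha+1}\to K_{\alpha+1}$, adjusted by a translation so as to send $*$ to the identity. Beyond this, the combinatorics of $X=\bigcup_\alpha X_\alpha$ must be arranged so that for every neighbourhood $V$ of $1_G$ only finitely many $x\in X$ satisfy $f(x)\notin V$; since any such $V$ contains some $N_\beta$ and the $N_\alpha$'s form a compact neighbourhood base at $1_G$, this reduces to ensuring that cofinitely many stages of the construction contribute images lying inside $N_\beta$, a uniformity condition which is the technical crux of the argument.
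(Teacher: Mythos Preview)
Your overall architecture matches the paper's: write $G$ as an inverse limit of separable metric quotients, run a transfinite recursion, and at successor stages invoke Lemma~\ref{main:lemma} to lift the map built so far. The gap is in the second half of your successor step, and it is not merely the technical lifting difficulty you flag; the density argument itself is broken.

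You propose to obtain the fresh generators by applying Fact~\ref{Fujita:Sh} to $K_{\alpha+1}$ and then lifting through the projection $M_{\alpha+1}\to K_{\alpha+1}$. Even granting such a lift, the conclusion you draw does not follow. If $F$ denotes the closure in $M_{\alpha+1}$ of the subgroup generated by the old and the new elements, all you know is that $F$ surjects onto $M_\alpha$ (from the old part, via the closed map $M_{\alpha+1}\to M_\alpha$) and onto $K_{\alpha+1}$ (from the new part). But a closed subgroup of $M_{\alpha+1}\subseteq M_\alpha\times K_{\alpha+1}$ can project onto both factors while remaining proper---think of the diagonal in $\mathbb{T}\times\mathbb{T}$---so ``the $K_{\alpha+1}$-coordinate traces out a dense generating sequence'' does not force $F=M_{\alpha+1}$. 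Nothing in your setup prevents the two selections from landing on such a skew subgroup.

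The paper's move is to aim Fact~\ref{Fujita:Sh} not at $K_{\alpha+1}$ but at the kernel of the bonding map $\pi\colon M_{\alpha+1}\to M_\alpha$. This kernel sits inside $\{1_{M_\alpha}\}\times K_{\alpha+1}$, hence is metric, and it is compact (it is contained in the image of the compact group $N_0$). Thus Fact~\ref{Fujita:Sh} applies to it directly, and the resulting sequence already lives in $M_{\alpha+1}$: your ``hard part'' (the second lift into a non-metric target) simply does not arise. Density becomes the standard argument: $\ker\pi\subseteq F$ together with $\pi(F)=M_\alpha$ gives $F=M_{\alpha+1}$. The same choice also dissolves your final continuity worry, since the new generators at stage $\alpha+1$ project to the identity in every earlier coordinate, so convergence to $1$ in the inverse limit is automatic. (You also omit the limit-ordinal step; there the point is that the partial diagonal image coincides with the abstract inverse limit, which follows from Fact~\ref{locally:compact:subgroups:are:closed}, but this is routine by comparison.)
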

\begin{proof}
Fix a local base $\{U_\alpha:\alpha<\tau\}$
at $1_G$. 
If $\tau\le \omega$, then $G$ is a compactly generated 
metric group,
and hence $G$ has the desired suitable set by Fact \ref{Fujita:Sh}
and Lemma \ref{building:suitable:sets}.

From now on we will assume that $\tau\ge\omega_1$.
Let $X$ be a set with $|X|=\tau$.
For every ordinal
$\alpha<\tau$, apply Fact \ref{separable:metric:quotient} to choose a compact normal subgroup $N_\alpha$ of $G$ such that
$N_\alpha\subseteq U_\alpha$ and $H_\alpha=G/N_\alpha$ has a countable base, and let 
$\psi_\alpha: G\to H_\alpha$ be the quotient map. For every ordinal $\alpha$ 
satisfying $1\le \alpha\le \tau$ define 
$\varphi_\alpha=\diag\{\psi_\beta:\beta<\alpha\}: G\to\prod \{H_\beta:\beta<\alpha\}$
and
$G_\alpha=\varphi_\alpha(G)$.
For $1\le\beta\le \alpha\le\tau$ let
$\varpi^\alpha_\beta: \prod \{H_\gamma:\gamma<\alpha\}\to\prod \{H_\gamma:\gamma<\beta\}$
be the natural projection, and define 
$\pi^\alpha_\beta=\varpi^\alpha_\beta\restr_{G_\alpha}: G_\alpha\to G_\beta$ to be the restriction of $\varpi^\alpha_\beta$ to $G_\alpha\subseteq \prod \{H_\gamma:\gamma<\alpha\}$. Note that $\pi^\alpha_\beta$ is a surjection.
By our construction, 
\begin{equation}
\label{eq:0}
\varphi_\alpha\circ\pi^\alpha_\beta=\varphi_\beta 
\text{ and }
\pi^\alpha_\gamma=\pi^\beta_\gamma\circ \pi^\alpha_\beta
\text{ whenever } 
1\le\gamma\le \beta\le \alpha\le\tau.
\end{equation}
\begin{claim}
\label{phi:are:perfect}
$\varphi_\alpha$ is a perfect map for every $\alpha$ with $1\le\alpha\le\tau$.
\end{claim}
\begin{proof}
Each $\psi_\beta$ is a perfect map by Fact \ref{quotients:with:respect:to:compact:group:are:perfect}, so 
the map
$\varphi_\alpha=\diag\{\psi_\beta:\beta<\alpha\}$ is also perfect by 
Fact \ref{diagonal:of:perfect:maps}.
\end{proof}

By transfinite recursion on $\alpha$, for every ordinal 
$\alpha$ satisfying $1\le \alpha\le\tau$ we will define a continuous map 
$f_\alpha:S(X)\to G_\alpha$ satisfying the following properties:
\begin{itemize}
\item[(i$_\alpha$)] 
$f_\beta=\pi^\alpha_\beta\circ f_\alpha$ whenever $1\le \beta<\alpha$,
\item[(ii$_\alpha$)]
$f_\alpha(*)=1_{G_\alpha}$,
\item[(iii$_\alpha$)]
$|\{x\in X:f_\alpha(x)\neq 1_{G_\alpha}\}|\le\omega\cdot|\alpha|$,
\item[(iv$_\alpha$)]
$\grp{f_\alpha(S(X))}$ is dense in $G_\alpha$.
\end{itemize}

To motivate these conditions, we mention that (ii$_\alpha$) and (iv$_\alpha$)
guarantee that $f_\alpha(S(X))\setminus \{1_{G_\alpha}\}$ is a suitable set 
for $G_\alpha$ (Lemma \ref{building:suitable:sets}). The other two conditions
(i$_\alpha$) and (iii$_\alpha$) are technical and needed only for carrying out
the recursion construction.

We start our recursion with $\alpha=1$.
First of all note that $\varphi_1=\psi_0$ and $G_1=H_0$.
Being a continuous homomorphic image of a compactly generated group $G$,
$G_1$ itself is compactly generated.
Let $N$ be a countable subset of $X$. Since $S(N)$ and $S(\mathbb{N})$ are 
homeomorphic, applying Fact \ref{Fujita:Sh} we can find a continuous  
map $f:S(N)\to G_1$ such that 
$f(*)=1_{G_1}$ and $\grp{f(S(N))}$ is dense in $G_1$.
We extend this map to the continuous map $f_1:S(X)\to G_1$
by defining $f_1(x)=1_{G_1}$ for every $x\in X\setminus N$
and $f_1(y)=f(y)$ for $y\in S(N)$. Now note that
$f_1$ satisfies properties (i$_1$)--(iv$_1$).

Suppose now that 
$\alpha$ is an ordinal with $1<\alpha\le\tau$. Assume also that 
a continuous map $f_\beta:S(X)\to G_\beta$ satisfying properties (i$_\beta$)--(iv$_\beta$) has been already defined for every ordinal 
$\beta$ such that $1\le\beta<\alpha$. We are going to define 
a continuous map 
$f_\alpha:S(X)\to G_\alpha$ satisfying properties (i$_\alpha$)--(iv$_\alpha$).
As usual, we consider two cases.

\medskip
{\it Case 1.\/} {\sl $\alpha=\beta+1$ is a successor ordinal\/}. 
Clearly, a subspace 
\begin{equation}
\label{Y:beta}
Y_\beta=\{x\in X:f_\beta(x)\neq 1_{G_\beta}\}\cup \{*\}
\end{equation} 
of $S(X)$ is closed in $S(X)$. Hence, $Y_\beta$ is a compact space 
with at most 
one non-isolated point. In particular, $Y_\beta$ is zero-dimensional. 

We claim that $K_0=G_\beta$, $K_1=H_\beta$, $\chi_0=\varphi_\beta$,
$\chi_1=\psi_\beta$, $N=G_\alpha$, $Y=Y_\beta$ and 
$h=f_\beta\restr_{Y_\beta}$ satisfy the assumptions of Lemma
\ref{main:lemma}. Indeed, $\chi=\chi_0\diag\chi_1=\varphi_\beta\diag\psi_\beta=\varphi_\alpha$, and so $N=G_\alpha=\varphi_\alpha(G)=\chi(G)$.
(a) holds trivially. 
The map $\chi_0=\varphi_\beta$ is perfect by Claim \ref{phi:are:perfect}, while 
$\chi_1=\psi_\beta$ is a perfect map by Fact 
\ref{quotients:with:respect:to:compact:group:are:perfect}.
This proves (b). Since $f_\beta$ is a continuous map, so is $h=f_\beta\restr_{Y_\beta}$.
Thus establishes (c).

Let $g:Y_\beta\to G_\alpha$ be a continuous map satisfying 
$f_\beta\restr_{Y_\beta}=\pi^\alpha_\beta\circ g$
which exists according to the conclusion 
of Lemma
\ref{main:lemma}.
Define $g':Y_\beta\to G_\alpha$ by $g'(y)=g(y)\cdot g(*)^{-1}$ for 
$y\in Y_\beta$. Clearly, $g'$ is a continuous map and $g'(*)=1_{G_\alpha}$.
If $y\in Y_\beta$, then 
$\pi^\alpha_\beta\circ g(*)=f_\beta\restr_{Y_\beta}(*)=f_\beta(*)=1_{G_\beta}$
by (ii$_\beta$), and so
$$
\pi^\alpha_\beta(g'(y))=\pi^\alpha_\beta(g(y)\cdot g(*)^{-1})=
\pi^\alpha_\beta(g(y))\cdot \pi^\alpha_\beta(g(*))^{-1}=
f_\beta\restr_{Y_\beta}(y)\cdot (1_{G_\beta})^{-1}=f_\beta\restr_{Y_\beta}(y)
$$
because $\pi^\alpha_\beta$ is a group homomorphism.
This gives 
\begin{equation}
\label{g:prime}
\pi^\alpha_\beta\circ g'=f_\beta\restr_{Y_\beta}.
\end{equation}

Since $\beta\ge 1$, from (\ref{eq:0}) we have 
$\ker \pi^\alpha_\beta= \varphi_\alpha(\ker\varphi_\beta)
\subseteq \varphi_\alpha(\ker \psi_0)\subseteq 
\varphi_\alpha(N_0)$. 
Since $N_0$ is compact, so is $\varphi_\alpha(N_0)$. 
Being a closed subspace of $\varphi_\alpha(N_0)$, $\ker \pi^\alpha_\beta$ must be 
compact. Since $\ker \pi^\alpha_\beta\subseteq \{1_{G_\beta}\}\times H_\beta$ and $H_\beta$ has a countable base, $\ker \pi^\alpha_\beta$ is a compact metric group. 

Note that $|Y_\beta|\le\omega\cdot|\beta|<\tau$ by (iii$_\beta$), 
and since $\tau\ge\omega_1$,
we can choose
a countable set $Z_\beta\subseteq X$ with $Y_\beta\cap Z_\beta=\emptyset$.
Since $Z_\beta\cup \{*\}$ is naturally homeomorphic to 
$S(\mathbb{N})$, Fact \ref{Fujita:Sh} allows us to find a continuous map
$\theta:Z_\beta\cup \{*\}\to \ker\pi^\alpha_\beta\subseteq G_\alpha$ such that
$\theta(*)=1_{G_\alpha}$ and $\grp{\theta(Z_\beta)}$ is dense in $\ker\pi^\alpha_\beta$.

Now define the map $f_\alpha:S(X)\to G_\alpha$ by 
$$
f_\alpha(x)=
\left\{\begin{array}{ll}
g'(x) & \text{ if } x\in Y_\beta,\\
\theta(x) & \text{ if } x\in Z_\beta,\\
1_{G_\alpha} & \text{ if } x\in S(X)\setminus (Y_\beta\cup Z_\beta).
\end{array}\right.
$$
Since both $g'$ and $\theta$ are continuous maps, one can easily check that 
the map $f_\alpha$ is continuous as well.

\begin{claim}
\label{alpha:beta}
$f_\beta=\pi^\alpha_\beta\circ f_\alpha$.
\end{claim}
\begin{proof}
 If $y\in Y_\beta$, then 
$\pi^\alpha_\beta(f_\alpha(y))=\pi^\alpha_\beta(g'(y))=f_\beta\restr_{Y_\beta}(y)=f_\beta(y)$
by (\ref{g:prime}).

Suppose now that $x\in S(X)\setminus Y_\beta$.
We claim that $\pi^\alpha_\beta(x)=1_{G_\beta}$. Indeed,
if $x\in Z_\beta$, then 
$\pi^\alpha_\beta(f_\alpha(x))=\pi^\alpha_\beta(\theta(x))=1_{G_\beta}$
because $\theta(x)\in\theta(Z_\beta)\subseteq \ker\pi^\alpha_\beta$.
If $x\in S(X)\setminus (Y_\beta\cup Z_\beta)$, then $f_\alpha(x)=1_{G_\alpha}$, and so
$\pi^\alpha_\beta(f_\alpha(x))=\pi^\alpha_\beta(1_{G_\alpha})=1_{G_\beta}$.
Finally, (\ref{Y:beta}) and (ii$_\beta$) yields  
$f_\beta(x)=1_{G_\beta}
=\pi^\alpha_\beta(f_\alpha(x))$ 
for 
$x\in S(X)\setminus Y_\beta$. 
\end{proof}

Let us check now conditions (i$_\alpha$)--(iv$_\alpha$).

(i$_\alpha$) 
Suppose that $1\le \gamma<\alpha=\beta+1$. If $\gamma=\beta$,
then Claim \ref{alpha:beta} applies.
Suppose now that $1\le\gamma<\beta$.
Then 
$\pi^\alpha_\gamma\circ f_\alpha=\pi^\beta_\gamma\circ \pi^\alpha_\beta\circ f_\alpha=
\pi^\beta_\gamma\circ f_\beta=f_\gamma$ by 
(\ref{eq:0}),
Claim \ref{alpha:beta}
 and (i$_\beta$).

(ii$_\alpha$) $f_\alpha(*)=g'(*)=1_{G_\alpha}$.

(iii$_\alpha$)
From the definition of $f_\alpha$ one has
 $\{x\in X:f_\alpha(x)\neq 1_{G_\alpha}\}\subseteq Y_\beta\cup Z_\beta$,
and so
$|\{x\in X:f_\alpha(x)\neq 1_{G_\alpha}\}|\le|Y_\beta|\cdot |Z_\beta|
\le 
\omega\cdot|\beta|\cdot \omega\le \omega\cdot |\alpha|$
by (iii$_\beta$).

(iv$_\alpha$)
Let $F$ be the closure of $\grp{f_\alpha(S(X))}$ in $G_\alpha$. We need to show that $F=G_\alpha$. Observe that
$\grp{f_\alpha(Z_\beta)}\subseteq \grp{f_\alpha(S(X))}\subseteq F$.
Since $\grp{f_\alpha(Z_\beta)}$ is dense in $\ker \pi^\alpha_\beta$, it now follows that $\ker \pi^\alpha_\beta\subseteq F$.
Since both $\varphi_\alpha$ and $\pi^\alpha_\beta$ are surjections and 
$\pi^\alpha_\beta\circ \varphi_\alpha=\varphi_\beta$ is a perfect map
by (\ref{eq:0}) and Claim \ref{phi:are:perfect}, Fact \ref{perfect:is:divisible}
allows us to conclude that
$\pi^\alpha_\beta$ is a perfect (and hence also closed) map. Therefore,
$\pi^\alpha_\beta(F)$ is a closed subset of $G_\beta$.
From  (i$_\alpha$) one gets
$\pi^\alpha_\beta({f_\alpha(S(X))})=f_\beta(S(X))$, 
and since $\pi^\alpha_\beta$ is a group homomorphism, 
one also has
$\grp{f_\beta(S(X))}=\pi^\alpha_\beta(\grp{f_\alpha(S(X))})\subseteq \pi^\alpha_\beta(F)$.
According to (iv$_\beta$), the set $\grp{f_\beta(S(X))}$ is dense in 
$G_\beta$, and since $\pi^\alpha_\beta(F)$ is closed in $G_\beta$, 
this yields $\pi^\alpha_\beta(F)=G_\beta$.
Since $F$ is a subgroup of $G_\alpha$ satisfying both $\ker\pi^\alpha_\beta\subseteq F$ and $\pi^\alpha_\beta(F)=G_\beta=\pi^\alpha_\beta(G_\alpha)$, one obtains
$F=G_\alpha$.

\medskip
{\it Case 2.\/} {\sl $\alpha$ is a limit ordinal\/}. 
Define 
\begin{equation}
\label{def:L}
L_\alpha=\left\{h\in\prod\{H_\beta:\beta<\alpha\}:h\restr_\beta\in G_\beta
 \text{ whenever } 1\le\beta<\alpha\right\}.
\end{equation}

\begin{claim}
\label{dense:claim}
Suppose that $H\subseteq L_\alpha$ and 
$\{h\restr_\beta:h\in H\}$ is dense in $G_\beta$
whenever $1\le\beta<\alpha$.
Then $H$ is dense in $L_\alpha$.
\if
the inverse limit 
$$
L_\alpha=\left\{g\in\prod\{G_\beta:1\le\beta<\alpha\}: g\restr_\gamma=\pi^\beta_\gamma(g\restr_\beta) \text{ whenever } 1\le \gamma<\beta<\alpha\right\}
$$ 
of the inverse spectrum $\{G_\alpha, \pi^\beta_\gamma, 1\le\gamma\le\beta<\alpha\}$.
\fi
\end{claim}
\begin{proof}
Let $U$ be an open subset of the product $\prod\{H_\beta:\beta<\alpha\}$
such that $U\cap L_\alpha\not=\emptyset$.
Pick arbitrarily $g\in U\cap L_\alpha$. 
There exist $n\in\omega$, pairwise distinct ordinals $\gamma_0,\gamma_1,\dots, \gamma_n<\alpha$ and an open 
subset $V_i$ of $H_{\gamma_i}$ for every $i\le n$ such that $g(\gamma_i)\in V_i$
for all $i\le n$
and 
\begin{equation}
\label{eq:1}
\left\{h\in \prod\{H_\beta:\beta<\alpha\}:h(\gamma_i)\in V_i \text{ for all }
i\le n\right\}\subseteq U.
\end{equation}
Since $\alpha$ is a limit 
ordinal, $\beta=\max\{\gamma_i:i\le n\}+1<\alpha$.
Note that 
\begin{equation}
\label{eq:2}
W=\left\{h\in \prod\{H_\gamma:\gamma<\beta\}:h(\gamma_i)\in V_i \text{ for all } 
i\le n\right\}
\end{equation}
is an open subset of $\prod\{H_\gamma:\gamma<\beta\}$
and $g\restr_\beta\in W$. Since $g\in L_\alpha$, one has $g\restr_\beta\in G_\beta$
by 
(\ref{def:L}).
It follows that $g\restr_\beta\in W\cap G_\beta\not=\emptyset$.
By the assumption of our claim, there exists some $h\in H$ such that
$h\restr_\beta\in W$. Now from (\ref{eq:1}),  (\ref{eq:2}) and the choice 
of $\beta$ we get $h\in U$.
Thus $h\in H\cap U\not=\emptyset$.
\if
there exist an ordinal $\beta$ with $1\le\beta<\alpha$ 
and an open subset $V$ of $G_\beta$
such that $g(\beta)\in V$ and $\pi_\beta^{-1}(V)\subseteq U$, where
$\pi_\beta: \prod\{G_\beta:1\le\beta<\alpha\}\to G_\beta$ is the projection
on $\beta$th coordinate.
Since $g(\beta)\in V\not=\emptyset$ and $\varphi_\beta(X)$ is dense in $G_\beta$, 
there exists $x\in X$ such that $\varphi_\beta(x)\in V$.
Define $h\in \prod\{G_\beta:1\le\beta<\alpha\}$ by $h(\gamma)=\varphi_\gamma(x)$
whenever $1\le\gamma<\alpha$.
\fi
\end{proof}
\begin{claim}
\label{G:alpha:denseness}
$G_\alpha\subseteq L_\alpha$ and $G_\alpha$ is dense in $L_\alpha$.
\end{claim}
\begin{proof}
Let $h\in G_\alpha$. Then $h=\varphi_\alpha(g)$ for some $g\in G$.
For every ordinal $\beta$ satisfying $1\le\beta<\alpha$ one has
$h\restr_\beta=
%\pi^\alpha_\beta(h)=\pi^\alpha_\beta(\varphi_\alpha(g))=
\varphi_\beta(g)\in G_\beta$, which yields $h\in L_\alpha$
by (\ref{def:L}).
Thus, $G_\alpha\subseteq L_\alpha$.

Assume that $\beta$ is an ordinal satisfying $1\le\beta<\alpha$.
Let 
$h'\in G_\beta$. Then $h'=\varphi_\beta(g)$ for some $g\in G$.
Now $h=\varphi_\alpha(g)\in G_\alpha$ and
$h\restr _\beta=
\varphi_\beta(g)=h'$. This yields 
$G_\beta\subseteq\{h\restr_\beta:h\in G_\alpha\}$.
The converse inclusion $\{h\restr_\beta:h\in G_\alpha\}\subseteq G_\beta$
is trivial.
This shows that $\{h\restr_\beta:h\in G_\alpha\}= G_\beta$.

Therefore, $G_\alpha$ (taken as $H$) satisfies the 
assumptions of Caim \ref{dense:claim}, so $G_\alpha$ must be dense
in $L_\alpha$ by the conclusion of this claim. 
\end{proof}
\begin{claim}
\label{G:and:L:coincide}
$G_\alpha=L_\alpha$.
\end{claim}
\begin{proof}
The map
$\varphi_\alpha$
is open  
by Claim \ref{phi:are:perfect}
and
Fact 
\ref{quotient:implies:open}. As an open continuous image of a locally compact group $G$, the group $G_\alpha=\varphi_\alpha(G)$ is also locally compact.
Since $L_\alpha$ is a topological group containing $G_\alpha$ (Claim \ref{G:alpha:denseness}), $G_\alpha$ must be closed in $L_\alpha$ 
(Fact \ref{locally:compact:subgroups:are:closed}).
Since $G_\alpha$ is also dense in $L_\alpha$ (Claim \ref{G:alpha:denseness}),
the conclusion of our claim follows.
\end{proof}

We are now ready to define $f_\alpha:S(X)\to G_\alpha$. Let $x\in S(X)$ be 
arbitrary. Since (i$_\beta$) holds for every 
ordinal $\beta$ satisfying $1\le\beta<\alpha$, 
there exists a 
unique $h_x\in L_\alpha$ such that $h_x\restr_\beta=f_\beta(x)$ for all $\beta$ with
$1\le\beta<\alpha$. Now $h_x\in G_\alpha$ by Claim \ref{G:and:L:coincide}, and so 
we can define $f_\alpha(x)$ to be this unique $h_x$. 

Let us check now conditions (i$_\alpha$)--(iv$_\alpha$).
Condition (i$_\alpha$) clearly holds.
Since each $f_\beta$ is a continuous map, so is $f_\alpha$. 
(ii$_\beta$) for 
$1\le \beta<\alpha$
trivially implies 
(ii$_\alpha$).
Similarly, (iii$_\beta$) for 
$1\le \beta<\alpha$
yields 
(iii$_\alpha$).
To check (iv$_\alpha$)
it suffices to show, 
in view of Claim \ref{G:and:L:coincide}, 
that $H=\grp{f_\alpha(S(X))}\subseteq G_\alpha=L_\alpha$
satisfies the assumptions of Claim \ref{dense:claim}.
Indeed, assume $1\le \beta<\alpha$.
Since $\pi^\alpha_\beta$ is a group homomorphism, 
from (i$_\alpha$) one has
$$
\{h\restr_\beta:h\in H\}=\{\pi^\alpha_\beta(h):h\in H\}=
\pi^\alpha_\beta(\grp{f_\alpha(S(X))})=
\grp{\pi^\alpha_\beta(f_\alpha(S(X)))}=
\grp{f_\beta(S(X))},
$$
and the latter set is dense in $G_\beta$ by (iv$_\beta$).

The recursive construction has been complete. 

According to (ii$_\tau$), we have $f_\tau(*)=1_{G_\tau}$.
According to (iv$_\tau$),
$\grp{f_\tau(S(X))}$ is dense in $G_\tau$. 
From Lemma \ref{building:suitable:sets}, 
we conclude that $S=f_\tau(S(X))\setminus \{1_{G_\tau}\}$ is a suitable set for $G_\tau$ such that $S\cup\{1_{G_\tau}\}$ is compact.

Now observe that 
$\ker\varphi_\tau\subseteq \bigcap\{N_\alpha:\alpha<\tau\}\subseteq 
\bigcap\{U_\alpha:\alpha<\tau\}=\{1_G\}$, and hence
$\varphi_\tau:G\to G_\tau$ is an algebraic isomorphism.
Furthermore, 
$\varphi_\tau$ is a perfect map by Claim \ref{phi:are:perfect}.
Finally, note that a one-to-one continuous perfect map is a homeomorphism.
Thus, $G$ and $G_\tau$ are isomorphic as topological groups.
\end{proof}

\noindent
{\bf Proof of Theorem \ref{Hofmann-Morris:theorem}:}
Let $H$ be a locally compact group. Take an open neighbourhood $U$ of the identity
$1_H$ that has a compact closure $\overline{U}$ in $H$.
Then 
$G=\grp{{U}}$ is an open (and thus closed \cite[Chapter II, Theorem 5.5]{HR}) 
subgroup of $H$.
In particular, $\overline{U}\subseteq \overline{G}=G$, and so
$G$ is generated by its open subset $U$ with compact closure (in $G$).
According to Theorem \ref{main:theorem},
$G$ has a suitable set $S$.
Choose $X\subseteq H\setminus G$ such that $\{xG:x\in X\}$ forms a (faithfully indexed) partition 
of $H\setminus G$. 
One can easily check now that
$S\cup X$ is a suitable set for $H$.
\qed

\medskip
\noindent
{\bf Acknowledgement.\/} The author would like to thank the Guest Editor Dikran Dikranjan and the referee for helpful suggestions on the exposition.

\end{document}